\title{A Directional Equispaced interpolation-based\\Fast Multipole Method for oscillatory kernels}
\author{Igor Chollet\thanks{Institut des Sciences du Calcul et des Donn\'ees (ISCD), Sorbonne Universit\'e, INRIA Alpines, F-75005 (igor.chollet@inria.fr).}
\and Xavier Claeys\thanks{Laboratoire Jacques-Louis Lions, Sorbonne Universit\'e, Inria équipe ALPINES, F-75005
  (claeys@ann.jussieu.fr).}
\and Pierre Fortin \thanks{Sorbonne Universit\'e, CNRS, LIP6, F-75005
Paris, France; Univ. Lille, CNRS, Centrale Lille, UMR 9189 CRIStAL, F-59000 Lille, France (pierre.fortin@univ-lille.fr)}
\and Laura Grigori \thanks{Laboratoire Jacques-Louis Lions, Sorbonne Universit\'e, Inria équipe ALPINES, F-75005 (laura.grigori@inria.fr)}}
\newcommand*{\addFileDependency}[1]{
  \typeout{(#1)}
  \@addtofilelist{#1}
  \IfFileExists{#1}{}{\typeout{No file #1.}}
}
\newtheorem{theorem}{Theorem}
\newtheorem{lemma}{Lemma}
\newtheorem{definition}{Definition}
\newcommand{\nc}{\newcommand}
\nc{\bb}{\mathbb}
\nc{\cc}{\mathcal}
\nc{\kk}{\mathfrak}
\nc{\bo}{\mathbf}
\nc{\scr}{\mathcal}
\nc{\inpar}[1]{\ensuremath{\left( #1 \right)}}
\nc{\meif}[3]{\ensuremath{\begin{aligned}\begin{cases}#1 &\textit{if }#2\\0&\textit{otherwise}\end{cases}\end{aligned}}}
\nc{\meiff}[5]{\ensuremath{\begin{aligned}\begin{cases}#1 &\textit{if }#2\\#3&\textit{if }#4\\#5&\textit{otherwise}\end{cases}\end{aligned}}}
\nc{\fc}[1]{\bb{C}[#1]}
\nc{\scal}[1]{\langle #1\rangle}
\nc{\mymap}[3]{#1\hspace{0.05cm}:\hspace{0.05cm}#2\rightarrow #3}
\nc{\cblue}[1]{\color{blue}{#1}\color{black}{}}
\nc{\cred}[1]{\color{red}{#1}\color{black}{}}
\nc{\cmblue}[1]{\color{MidnightBlue}{#1}\color{black}{}}
\nc{\cgrey}[1]{\color{gray}{#1}\color{black}{}}
\nc{\cmagenta}[1]{\color{magenta}{#1}\color{black}{}}
\nc{\corange}[1]{\color{orange}{#1}\color{black}{}}
\nc{\cviolet}[1]{\color{violet}{#1}\color{black}{}}
\begin{document}

\maketitle

\begin{abstract}
Fast Multipole Methods (FMMs) based on the oscillatory Helmholtz kernel can reduce the cost of solving N-body problems arising from Boundary Integral Equations (BIEs) in acoustic or electromagnetics. However, their cost strongly increases in the high-frequency regime. 
  This paper introduces a new directional FMM for oscillatory kernels (\textit{defmm} - directional equispaced interpolation-based fmm), whose precomputation and application are FFT-accelerated due to polynomial interpolations on equispaced grids. We demonstrate the consistency of our FFT approach, and show how symmetries can be exploited in the Fourier domain. We also describe the algorithmic design of \textit{defmm}, well-suited for the BIE non-uniform particle distributions, and present performance optimizations on one CPU core. Finally, we exhibit important performance gains on all test cases for \textit{defmm} over a state-of-the-art FMM library for oscillatory kernels.
\end{abstract}

\section{Introduction}
\label{s:intro}
Considering two point clouds $X,Y\subset \bb{R}^d$ with cardinal $N\in \bb{N}^*$, $d\in \bb{N}^*$ and $q\in \bb{C}[Y]$ (named \textit{charges}), where $\bb{C}[Y]$ denotes the set of application from $Y$ to $\bb{C}$, we are interested in the fast computation of $p\in \bb{C}[X]$ (referred to as \textit{potentials}) such that
\begin{equation}
\label{fmmsum}
    p(\bo{x}) := \sum_{\bo{y}\in Y}G(\bo{x},\bo{y}) q(\bo{y}),\hspace{0.5cm}\forall\hspace{0.05cm}\bo{x}\in X,
\end{equation}
where $G\hspace{0.05cm}:\hspace{0.05cm}\bb{R}^d\times \bb{R}^d\rightarrow \bb{C}$. Such an \textit{N-body problem} appears in the numerical solving of Boundary Integral Equations (BIEs). We are especially concerned by the oscillatory \textit{Helmholtz kernel} involved in BIEs applied to acoustic or electromagnetics
\begin{equation*}
    G(\bo{x},\bo{y}) := \frac{e^{i\kappa |\bo{x}-\bo{y}|}}{4\pi |\bo{x}-\bo{y}|},
\end{equation*}
where $\kappa \in \bb{R}^+$ is named the \textit{wavenumber}, $i$ denotes the complex number and $|\cdot|$ refers to the Euclidian norm. Computing $p$ using hierarchical methods can achieve an $\cc{O}(N\hspace{0.02cm}\log\hspace{0.02cm}N)$ complexity on large surface \textit{particle distributions} (i.e. point clouds), that are the distributions generated when discretizing BIEs. However, the cost of the hierarchical methods can still be a bottleneck in the \textit{high-frequency regime},
i.e. when $\kappa D \gg 1$, $D$ denoting the side length of the smallest box encompassing $X$ and $Y$.
\subsection{Related work}
The direct evaluation of the N-body problem \eqref{fmmsum} has a complexity $\cc{O}(N^2)$. This complexity can be reduced using a hierarchical method, such as the Fast Multipole Method (FMM) \cite{rokhlingreengard87,wideband06}. 
Thanks to a hierarchical decomposition, such as a $2^d$-tree\footnote{Binary tree (d=1), quadtree (d=2) or octree (d=3).} representation of the (cubical) computational domain $B$, and to a multipole acceptance criterion, the computation of \eqref{fmmsum} is indeed divided into two parts: a near field one which is computed directly and a far field one which is 
approximated through multipole and local expansions. Nodes of such a $2^d$-tree are named \textit{cells} and 
correspond to cubical subdomains of $B$ whose radii depend on the tree level (see Figure \ref{fig:octree}). 

\begin{figure}
    \centering
    \includegraphics[width=0.8\linewidth]{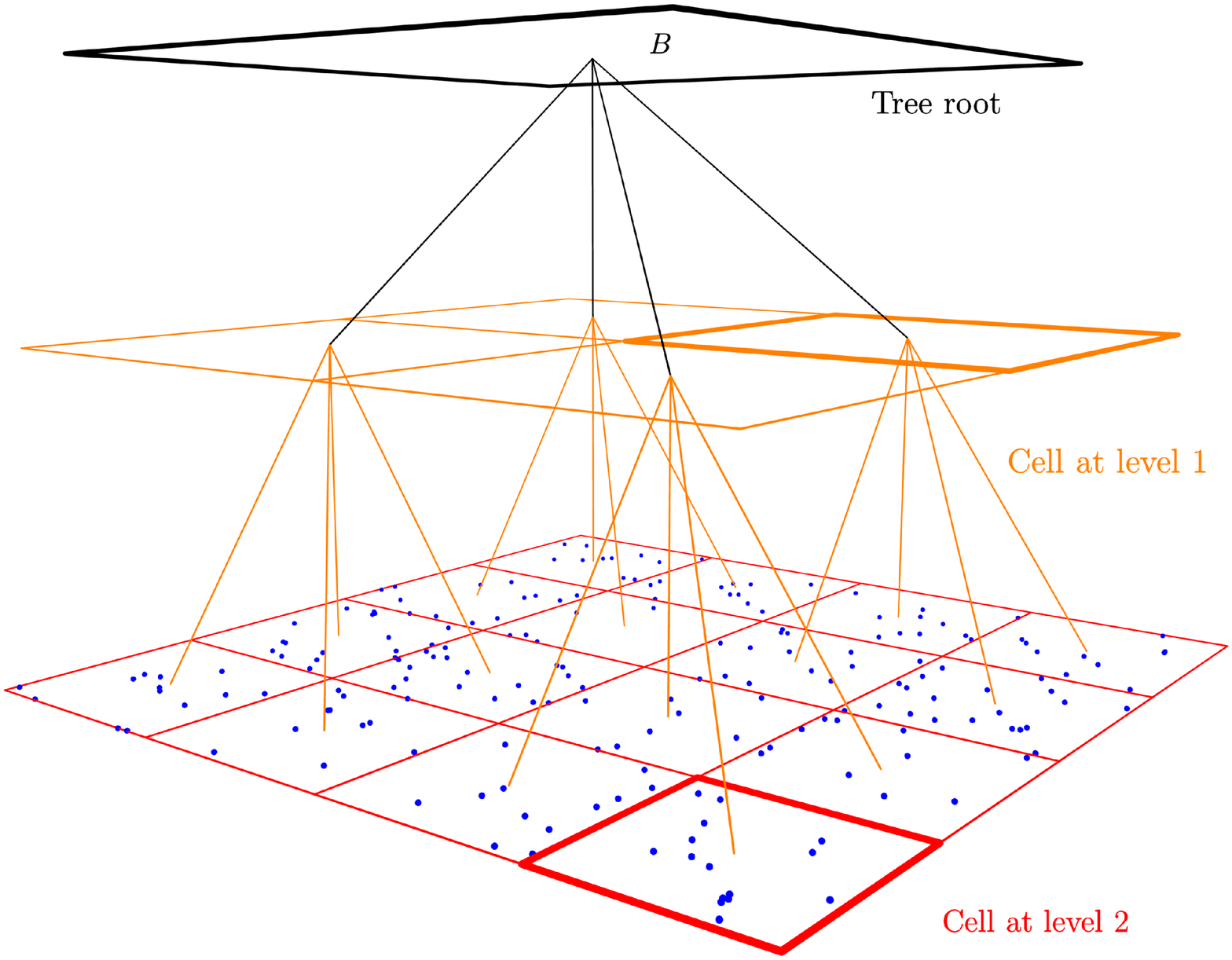}
    \caption{
    $2^d$-tree ($d=2$) representation of $B$ with particles colored in blue. 
    }
    \label{fig:octree}
\end{figure}

There mainly exist two approaches for FMMs dealing with the high-frequency Helmholtz kernel. First, the kernel-explicit methods consist in an explicit diagonalization of large far-field matrix blocks using analytic kernel expansions \cite{wideband06,ceckadarve13}, considering that the $N$-body problem in equation \ref{fmmsum} is interpreted as a matrix-vector product. 
These methods perform differently depending on the particle distribution, with complexities up to $\cc{O}(N^{3/2})$ (see \cite{ceckadarve13}). 
Second, the kernel-independent methods (see for instance \cite{darve09,yingbiroszorin04}) 
allow to use the same formalism, hence the same implementation, 
for a given class of kernels. Based on a specific definition of the \textit{well-separateness} (i.e. the subsets of $B\times B$ on which the far-field approximations can be applied), the \textit{directional} 
kernel-independent approaches exploit the low-rank property of particular blocks of a modified kernel \cite{engquistying,messner12,brandt91}. As opposed to the kernel-explicit methods, the kernel-independent directional approaches allow to derive fast algorithms with complexity $\cc{O}(N\hspace{0.05cm}\log\hspace{0.02cm}N)$ regardless of the particle distribution (see \cite{messner12}).

The directional FMM algorithms described in the literature exploit low-rank approximations to compress matrices representing the far-field contributions \cite{engquistying,messner12,messnerINRIA}. However, 
some highly efficient kernel-independent methods for low-frequency and translation-invariant 
kernels use Fast Fourier Transforms (FFTs) to efficiently process these matrices in diagonal form \cite{schoberteibert10}. These FFT-based methods exploit \textit{equispaced grids} (i.e. cartesian grids or tensorization of the same uniform sampling $d$ times). 
In FMM formulations based on polynomial interpolation, the use of FFT techniques was shown to be more efficient than low-rank approximations in the low-frequency regime \cite{blanchard:hal-01228519}, especially for high approximation orders. 
To our knowledge, directional FFT-based FMMs for the high-frequency regime have not been investigated in the past. 
Moreover, in such polynomial inter\-po\-la\-tion-\-ba\-sed FMMs, the matrices associated to some FMM operators have to be precomputed, but their number drastically increases in the high-frequency regime (see \cite{messnerINRIA}), which strongly increases the precomputation cost. FFT techniques can also help to reduce such precomputation cost.

In this paper, 
we aim at efficiently solving the N-body problem \eqref{fmmsum} with oscillatory kernels in both low- and high-frequency regimes. 
We thus present the first directional 
(hence $\cc{O}\left(N\hspace{0.05cm}log \hspace{0.05cm}N\right)$) 
interpolation-based FMM taking advantage of polynomial interpolation on equispaced grids to allow fast FFT-based evaluations and precomputations of the far-field matrices. 
A consistency proof for the polynomial Lagrange interpolation on such grids and on well-separated sets when dealing with asymptotically smooth kernels is provided. This gives a rigorous explanation of the practical convergence of our approximation process despite the well-known stability issue of such polynomial interpolation. We also show how to extend the use of some symmetries in the Fourier domain in order to minimize the number of precomputed matrices. FMMs can rely on different $2^d$-tree traversals: we adapt here a specific one (namely the dual tree traversal) to directional FMMs and show its relevance, along with a suitable $2^d$-tree data structure, for the
non-uniform particle distributions typical of BIE problems. We provide a new, 
publicly available\footnote{At: \url{https://github.com/IChollet/defmm}}, 
C++ library implementing our FMM, named as \textit{defmm} (\underline{d}irectional \underline{e}quispaced interpolation-based  \underline{fmm}) and highly optimized on one CPU core. We detail the vectorization process of our near-field direct computation using 
high-level programming, and we show how to efficiently process the numerous and small FFTs required in our method. We also improve the performance of the BLAS-based operators in the multipole and local expansion translations. Finally, we detail a comparison with a state-of-the-art directional polynomial interpo\-lation-based FMM library (namely \textit{dfmm} \cite{messner12,messnerINRIA}), exhibiting important performance gains for \textit{defmm} in all the test cases.

The paper is organized as follows. In section \ref{section_presentationFMM}, we first recall the mathematical bases of the FMMs involved in this paper.  We then provide in section \ref{section_consistency} a consistency proof of our interpolation process on equispaced grids. We detail in section \ref{section_defmm} the \textit{defmm}  
algorithmic design and we show how to extend and exploit the tree symmetries in the Fourier domain. In section \ref{section_optimizations}, we present various HPC optimizations on one CPU core for \textit{defmm}, and finally in section \ref{section_numerical} we provide numerical results, including a detailed performance 
comparison. 

\section{Presentation of Fast Multipole Methods}
\label{section_presentationFMM}
We focus in this article on the FMM formulation using polynomial interpolation and its variants \cite{darve09,schoberteibert10,messner12}, which are here briefly recalled. 
\subsection{Directional FMMs} Suppose that the kernel $G$ can be factorized into
\begin{equation}
    G(\bo{x},\bo{y}) = e^{i\kappa |\bo{x}-\bo{y}|}K(\bo{x}, \bo{y}),
\end{equation}
where $K$ is an asymptotically smooth non-oscillatory kernel. Directional approaches rely on the \textit{directional parabolic separation condition} (DPSC) \cite{engquistying}, imposing conditions on well-separated subsets of $B\times B$ (i.e. pairs of cells in practice) such that the term $G_{u}$ in the expression
\begin{equation}
\label{eq_decompGinOscillatoryNonOscillatory}
    G(\bo{x},\bo{y}) = e^{i\kappa \langle \bo{x},u\rangle}\underbrace{\left(e^{i\kappa \langle \bo{x}-\bo{y}, \frac{\bo{x}-\bo{y}}{|\bo{x}-\bo{y}|} -u \rangle }K(\bo{x}, \bo{y})\right)}_{=:G_{u}(\bo{x},\bo{y})}e^{-i\kappa \langle \bo{y},u\rangle}
\end{equation}
does not oscillate, where $u\in \bb{S}^2$ (referred to as a \textit{direction}) is a well-chosen 
approximation of $(\bo{x}-\bo{y})/|\bo{x}-\bo{y}|$ on the unit sphere. Since the only term in equation \eqref{eq_decompGinOscillatoryNonOscillatory} depending on both $\bo{x}$ and $\bo{y}$ is $G_u$, fast summation methods can be derived. The DPSC is verified when $\bo{x}\in t$, $\bo{y}\in s$, where $t,s$ are two cells such that
\begin{equation}
\label{e:DPSC}
    \kappa \hspace{0.02cm}w^2 \leq \eta\hspace{0.04cm}dist(t,s)\text{ and }t\subset C(s),
\end{equation}
with $dist(t,s) := min_{\bo{x}\in t,\bo{y}\in s}|\bo{x} - \bo{y}|$ the distance between $t$ and $s$, 
$w$ the radius\footnote{i.e. the radius of the smallest ball containing the cell.} of $s$ (assuming that $w>1$) and $C(s)$ a cone directed by $u$ with apex in the center of $s$ and aperture $\frac{\mu}{\kappa\hspace{0.02cm} w}$ \cite{engquistying}, $\eta,\mu$ being two strictly positive constants. Such pairs $(t,s)$ are said to be \textit{well-separated} in the high-frequency regime.

Because the cone aperture in the inequality \eqref{e:DPSC} decreases as the cell radius $w$ increases, more wedges are needed to compute the far field for cells close to the $2^d$-tree 
root 
than for deeper cells. Since the kernel approximations depend on the cone directions,
the number of approximations required for each target cell increases from the $2^d$-tree 
leaves to the root. To lower the far field computation cost, 
the set of directions at each tree level is chosen so that a nested property is verified \cite{engquistying,borm15}. This leads to a \textit{direction tree}: each direction at a given $2^d$-tree level $E$ is the son of a unique direction at the $2^d$-tree level $E+1$.

\subsection{Interpolation-based FMMs}
\label{subsection_interpolationbasedFMM}
The polynomial interpolation techniques for hierarchical methods \cite{gibermann01,bormlars04,darve09} rely on approximations of $G$ using Lagrange  interpolation. We refer to \cite{gascasauer00} for a description of the multivariate interpolation methods and to the Lagrange interpolation problem. In the following formula, we indicate in \cgrey{gray} the terms only appearing when combining the polynomial interpolation techniques for FMM with the directional approach \cite{messner12,messnerINRIA}. Let $\Xi_s = \{\bo{y}_1,...,\bo{y}_{\# \Xi_s}\}$ and $\Xi_t = \{\bo{x}_1,...,\bo{x}_{\# \Xi_t}\}$ (where $\# \Xi_m$, $m\in \{t,s\}$, denotes the cardinal of $\Xi_m$)
be two interpolation grids in the cells $s$ and $t$ respectively, where the pair $(t,s)$
is well-separated, we have:
\begin{equation}
  \label{eq_interpapprox}
  \begin{aligned}
    G(\bo{x},\bo{y}) &\approx \cgrey{e^{i\kappa \langle \bo{x},u\rangle}}\sum_kS_k[t](\bo{x})
    \sum_lG_{\cgrey{u}}(\bo{x}_k,\bo{y}_l)S_l[s](\bo{y})\cgrey{e^{-i\kappa \langle \bo{y},u\rangle}}\\
    & \approx \sum_k\underbrace{\left(\cgrey{e^{i\kappa \langle \bo{x},u\rangle}}S_k[t](\bo{x})
        \cgrey{e^{-i\kappa \langle \bo{x}_k,u\rangle}}\right)}_{=: S_k^{\cgrey{u}}[t](\bo{x})}\sum_lG(\bo{x}_k,\bo{y}_l)
    \underbrace{\left(\cgrey{e^{i\kappa \langle \bo{y}_l,u\rangle}}S_l[s](\bo{y})
        \cgrey{e^{-i\kappa \langle \bo{y},u\rangle}}\right)}_{=: S_l^{\cgrey{u}}[s](\bo{y})},
  \end{aligned}
\end{equation}
the polynomials $S_l[s]$ and $S_k[t]$ verifying for any $\bo{y}\in s$ and any $\bo{x}\in t$
\begin{equation*}
    S_l[s](\bo{y}) \meiff{=1}{\bo{y}=\bo{y}_l}{=0}{\bo{y}\in \Xi_s, \hspace{0.02cm}\bo{y}\neq \bo{y}_l}{\in \bb{R}}, \hspace{0.5cm}S_k[t](\bo{x}) \meiff{=1}{\bo{x}=\bo{x}_k}{=0}{\bo{x}\in \Xi_t, \hspace{0.02cm}\bo{x}\neq \bo{x}_k}{\in \bb{R}}.
\end{equation*}
Suppose that $\bo{x}\in t$, $t$ being a target cell. For any source cell $s$ well-separated from $t$, we define
\begin{equation*}
    \begin{aligned}
    p_{t,s}(\bo{x}) &:= \sum_{\bo{y}\in s\cap Y} G(\bo{x},\bo{y})q(\bo{y})\\
    &\approx  \sum_kS_k^{\cgrey{u}}[t](\bo{x})\sum_lG(\bo{x}_k,\bo{y}_l)\sum_{\bo{y}\in s\cap Y}S_l^{\cgrey{u}}[s](\bo{y})q(\bo{y}),
    \end{aligned}
\end{equation*}
where the second line is obtained thanks to the approximation \eqref{eq_interpapprox}. This interpolation process can be repeated on $t'\in Sons(t)$ and $s'\in Sons(s)$ recursively, leading in the end to a multilevel algorithm whose operators, named as in \cite{darve09,yokota18}, are the following:
\begin{align*}
&\textrm{P2M~(particles-to-multipole):} & \cc{M}_{s'}^{\cgrey{u}}(\bo{y}_r') &:=\displaystyle\sum_{\bo{y}\in s\cap Y}S_r^{\cgrey{u}}[s'](\bo{y})q(\bo{y}), & &\forall \bo{y}_r'\in \Xi_{s'};\\
&\textrm{M2M~(multipole-to-multipole):} & \cc{M}_s^{\cgrey{v}}(\bo{y}_l) &:=\displaystyle\sum_{r}S_l^{\cgrey{v}}[s](\bo{y}_r')\cc{M}_{s'}^{\cgrey{u}}(\bo{y}_r'), & &\forall \bo{y}_l\in \Xi_{s};\\
&\textrm{M2L~(multipole-to-local):} & \cc{L}_t^{\cgrey{v}}(\bo{x}_k) &:= \displaystyle\sum_lG(\bo{x}_k,\bo{y}_l)\cc{M}_s^{\cgrey{v}}(\bo{y}_l), & &\forall \bo{x}_k\in \Xi_{t};\\
&\textrm{L2L~(local-to-local):} & \cc{L}_{t'}^{\cgrey{u}}(\bo{x}_h') &:= \displaystyle\sum_{k}S_k^{\cgrey{v}}[t](\bo{x}_h')\cc{L}_{t'}^{\cgrey{v}}(\bo{x}_k), & &\forall \bo{x}_h'\in \Xi_{t'};\\
&\textrm{L2P~(local-to-particles):} & p_{t,s}(\bo{x}) &\approx\displaystyle\sum_hS_h^{\cgrey{u}}[t'](\bo{x})\cc{L}_{t'}^{\cgrey{u}}(\bo{x}_h'), & &\forall \bo{x}\in X\cap t;
\end{align*}
where: $\Xi_{s'} = \{\bo{y}_1',...,\bo{y}_{\# \Xi_{s'}}'\}$ and $\Xi_{t'} = \{\bo{x}_1',...,\bo{x}_{\# \Xi_{t'}}'\}$ are the interpolation grids in $s'$ and $t'$ respectively; $v$ is a son of $u$ in the direction tree such that $u$ and $v$ are the best approximations of $(ctr(t)-ctr(s))/|ctr(t)-ctr(s)|$; 
$ctr(c)$ denoting the center of the cell $c$. The direct evaluation of $p_{t,s}$ for all $\bo{x}\in X\cap t$ corresponds to the P2P  (particles-to-particles) operator between $t$ and $s$. The result of a P2M or M2M operation 
is a \textit{multipole} expansion and the result of a M2L or L2L operation is a \textit{local} expansion. The set of cells $s$ such that $p_{t,s}$ is approximated (i.e. not directly evaluated through a P2P operator) is named the \textit{interaction list} of $t$.

All the FMM operators we defined can be interpreted as matrix-vector products, where the vectors are local or multipole expansions. Hence, we shall for example use in the remainder 
\textit{M2L matrices} to refer to the matrices involved in the M2L matrix-vector product (and similarly for the other operators).

\subsection{Equispaced grids}
\label{subsection_equispacedgrids}
When dealing with multivariate polynomial interpolations, the most natural approach relies on tensorizations
of 1D interpolation grids in a cuboid \cite{darve09,bormlars04,gibermann01}.
Denoting $B_\star = \lbrack 0,1\rbrack^d$ the reference unit box, any
cell $c$ 
writes as 
\begin{equation}\label{CellCorrespondance}
  c = \boldsymbol{\alpha}_c + \beta_c \cdot B_\star
\end{equation}
where $\beta_c = \mathrm{diam}(B)/2^{L}$ with $L = \mathrm{level}(c)$, and $\boldsymbol{\alpha}_c\in \mathbb{R}^d$.
We focus here on equispaced grids, which allow to take advantage of the translation
invariance of the kernel, i.e. $G((\bo{x}+\bo{z})-(\bo{y}+\bo{z})) = G(\bo{x} - \bo{y})$. The interpolation grid
$\Xi_c\subset\mathbb{R}^d$ in cell $c$ is obtained from a reference grid $\Xi_\star\subset \mathbb{R}^d$ of $B_\star$
through \eqref{CellCorrespondance} 
\begin{equation}
  \begin{aligned}
    & \Xi_c = \boldsymbol{\alpha}_c + \beta_c \cdot \Xi_\star\\[3pt]
    & \text{where}\quad \Xi_\star := \mathbb{L}\times \dots \times\mathbb{L}\\
    & \textcolor{white}{where}\quad \mathbb{L} := \{ \ell/L,\; \ell = 0,\dots ,L\}.
  \end{aligned}
\end{equation}
These transformations can be used to transfer all evaluations 
of the kernel $G$ on the reference grid $\Xi_\star$.
Take a source cell $s$ and a target cell $t$, and assume that they belong to the same level of their $2^d$-tree
and thus admit the same size and $\beta_s = \beta_t$.
Any $\bo{x}\in s$ can be written as $\bo{x} = \boldsymbol{\alpha}_s + \beta_s\hat{\bo{x}}$ and any $\bo{y}\in t$
can be expressed as $\bo{y} = \boldsymbol{\alpha}_t + \beta_t\hat{\bo{y}}$, so that we have
\begin{equation}\label{NormalizedKernel}
  \begin{aligned}
    & G(\bo{x}-\bo{y}) = \mathcal{G}_{s,t}(\hat{\bo{x}} - \hat{\bo{y}})\\
    & \text{with}\quad \mathcal{G}_{s,t}(\bo{z}):= G(\boldsymbol{\alpha}_s-\boldsymbol{\alpha}_t + \beta_s \bo{z}).
  \end{aligned}
\end{equation}
The function $\bo{x},\bo{y}\mapsto G(\bo{x}-\bo{y})$ induces a linear map
$G(\Xi_t,\Xi_s):\mathbb{C}[\Xi_s]\to \mathbb{C}[\Xi_t]$. Similarly
$\bo{p},\bo{q}\mapsto \mathcal{G}_{s,t}(\bo{p}-\bo{q})$ induces a linear map 
$\mathcal{G}_{s,t}(\Xi_\star,\Xi_\star): \mathbb{C}[\Xi_\star]\to \mathbb{C}[\Xi_\star]$
and, by construction, these two linear maps can be identified to the same matrices
under consistent orderings $G(\Xi_t,\Xi_s) = \mathcal{G}_{s,t}(\Xi_\star,\Xi_\star)$.
Rewriting the kernel as above, we explicitly take advantage of the fact that $\beta_s = \beta_t$.
When examining interactions between $s$ and $t$, the normalized kernel $\mathcal{G}_{s,t}$ is evaluated
over the difference grid
\begin{equation}
  \begin{aligned}
    \Xi_{\sharp}
    & := \{ \bo{p}-\bo{q},\; \bo{p},\bo{q}\in \Xi_{\star}\}\subset \mathbb{R}^d\\
    & := \{ \ell/L,\; \ell\in\mathbb{N},\; -L\leq \ell\leq +L\}^d.
  \end{aligned}
\end{equation}
Obviously $\Xi_{\star}\subset \Xi_{\sharp}$, and any vector $u\in \mathbb{C}[\Xi_\star]$ can be considered as
an element of $\mathbb{C}[\Xi_\sharp]$ by means of extension by zero: define $\chi(u)\in \mathbb{C}[\Xi_\sharp]$
by $\chi(u)(\bo{p}) := u(\bo{p})$ if $\bo{p}\in \Xi_\star$ and $\chi(u)(\bo{p}) := 0$ otherwise. The linear map $\chi$
simply reduces to a matrix with entries equal to $0$ or $1$, with at most one non-vanishing entry per row, so
applying such a matrix to a vector is computationally harmless.
The application of this matrix numerically corresponds to a \textit{zero-padding}.

We have $\mathrm{card}(\Xi_{\sharp}) = T^d$ with $T:=(2L+1)$, i.e. $\Xi_{\sharp}$ contains $T$ samples in each
direction. The optimization we wish to exploit is based on Fast Fourier Transforms (FFTs), 
this is why we embed the action of \eqref{NormalizedKernel}
in a periodic setting. The kernel $\mathcal{G}_{s,t}$ can be extended by $T/L$-periodicity, denoting  $\mathcal{G}_{s,t}^\sharp$
the unique element of $\mathbb{C}[\Xi_{\sharp} + \mathbb{Z}^d]$ satisfying
\begin{equation}\label{CirculantKernel}
  \begin{aligned}
     &\mathcal{G}_{s,t}^\sharp(\bo{p} + (T/L)\boldsymbol{\tau}) = \mathcal{G}_{s,t}(\bo{p})\\
    & \text{for all}\quad \bo{p}\in \Xi_\sharp, \boldsymbol{\tau}\in\mathbb{Z}^d.
  \end{aligned}
\end{equation}
In particular $\mathcal{G}_{s,t}^\sharp(\bo{p}) = \mathcal{G}_{s,t}(\bo{p})$ for $\bo{p}\in \Xi_\sharp$.
Denote $\mathcal{G}_{s,t}^\sharp(\Xi_\sharp,\Xi_\sharp)$ the matrix associated to the
mapping from $\mathbb{C}[\Xi_\sharp]\to \mathbb{C}[\Xi_\sharp]$ associated to $p,q\mapsto\mathcal{G}_{s,t}^\sharp(\bo{p}-\bo{q})$. This matrix admits a circulant form due to the periodicity of
the corresponding kernel. The notations introduced above lead to a factorization of
M2L interactions between cells $s$ and $t$,
\begin{equation}\label{FactorizedM2LMatrix1}
  G(\Xi_s,\Xi_t) = \chi^\top\cdot\mathcal{G}_{s,t}^\sharp(\Xi_\sharp,\Xi_\sharp)\,\cdot\chi.
\end{equation}
As we are considering tensorized interpolation grids, and since $\mathcal{G}_{s,t}^\sharp(\Xi_\sharp,\Xi_\sharp)$
admits circulant form, we naturally make use of a multi-dimensional Discrete Fourier Transform (DFT)
to process this matrix. Let us briefly explain how.
Given the dual grid $\widehat{\Xi}_\sharp = L\cdot\Xi_\sharp
\subset \mathbb{R}^d$, the $d$-dimensional DFT refers to the linear maps $\mathbb{F}:\mathbb{C}[\Xi_\sharp]\to
\mathbb{C}[\widehat{\Xi}_\sharp]$ defined by the formula
\begin{equation}\label{DFTformula}
  \mathbb{F}(u)(\boldsymbol{\xi}) := \frac{1}{T^{d/2}}\sum_{\bo{x}\in \Xi_\sharp}u(\bo{x})
  \exp(-2\imath \pi\, \boldsymbol{\xi}\cdot \bo{x}) \quad\quad \boldsymbol{\xi}\in \widehat{\Xi}_\sharp.
\end{equation}
The inverse DFT is simply the adjoint map defined by the formula $\mathbb{F}^*(\hat{u})(\bo{x}) =
T^{-d/2}\sum_{\bo{\xi}\in \widehat{\Xi}_\sharp}\hat{u}(\bo{\xi}) \exp(+2\imath \pi\, \bo{\xi}\cdot \bo{x})$ for
$\bo{x}\in\Xi_\sharp$. Due to the circulant form of the matrix $\mathcal{G}_{s,t}^\sharp(\Xi_\sharp,\Xi_\sharp)$,
there exists a diagonal matrix $\mathbb{D}_{s,t}$ such that 
\begin{equation}\label{FourierDiagonal}
  \begin{aligned}
    & \mathcal{G}_{s,t}^\sharp(\Xi_\sharp,\Xi_\sharp) = \mathbb{F}^*\cdot \mathbb{D}_{s,t}\cdot\mathbb{F}\\
    & \text{hence}\quad G(\Xi_s,\Xi_t) = \chi^\top\cdot\mathbb{F}^*\cdot \mathbb{D}_{s,t}\cdot\mathbb{F}\cdot\chi.
  \end{aligned}
\end{equation}
The diagonal of $\mathbb{D}_{s,t} = \mathrm{diag}_{\bo{\xi}\in \widehat{\Xi}_\sharp }
(\,\mathbb{F}(\mathcal{G}_{s,t})(\bo{\xi})\,)$ contains the Fourier coefficients of
the kernel. From the factorized form above, we see that matrix-vector
product for the M2L matrix $G(\Xi_s,\Xi_t)$ can be processed efficiently. Indeed
the action of $\chi$ is inexpensive, and $\mathbb{F}$ can be applied efficiently with
$\cc{O}(d\, L^d\, \log L )$ complexity by means of FFTs. Notice that the diagonal of $\bb{D}_{s,t}$ (i.e. its non-zero entries) is obtained by DFT of the first column of $\mathcal{G}_{s,t}^\sharp(\Xi_\sharp,\Xi_\sharp)$ and can also be efficiently processed using FFT.

Since the same Fourier basis is used for all M2L matrices, one can apply $\bb{F}$ and $\bb{F}^*$ (as well as $\chi$ and $\chi^\top$) once to each multipole and local expansions respectively \textit{before} and \textit{after} all the M2L evaluations.
This reduces M2L matrix evaluations to products with diagonal matrices $\bb{D}_{s,t}$ (i.e. Hadamard products), each with a complexity of $(2L-1)^d$ flop.  
In comparison with the product of a vector by a rank-$k$ matrix approximation of a M2L matrix, requiring $2kL^d$ operations,
the product with the diagonalization of the circulant embedding of this matrix is \textit{theoretically} less costly if $k\geq 2^{d-1}$,
which is valid except when requesting very low overall accuracies. 
However, low-rank approaches usually rely on highly optimized implementations (namely 
BLAS routines) to significantly reduce the computation times of the dense matrix-vector (or matrix-matrix) products (see e.g. \cite{messnerINRIA}). 
Therefore, theoretical operation counts are insufficient 
to compare a FFT-based approach with a low-rank one. We will have to compare practical time measurements using
implementations of both approaches to determine the best one.  
One can already notice that, for non-oscillatory kernels, 
FFT-based approaches offer better or similar
performance (depending on the interpolation order) than low-rank compressions for the M2L matrix evaluations
\cite{blanchard:hal-01228519,chenaubryoppelstruparsenlisdarve18}: 
this encourages us to investigate the FFT-based approaches for the oscillatory kernels. 

In this purpose, we recall first that 
the interpolation process on equispaced 
grids is known to be subject to 
the Runge phenomenon, which may cause the process to diverge, especially for high interpolation orders (see for instance \cite{chenaubryoppelstruparsenlisdarve18}).

\section{Consistency of the equispaced interpolation}
\label{section_consistency}
Despite the asymptotic instability of the Lagrange polynomial interpolation process on equispaced grids, we 
show in this section that the approximation \eqref{eq_interpapprox} actually converges on well-separated sets. 
We define the \textit{Multipole Acceptance Criterion} (or MAC) $\cc{A}$ as a boolean function that takes two cells in argument and returns $1$ if and only if these two cells are well-separated. As in \cite{darve09,blanchard:hal-01228519}, we only consider cells at the same $2^d$-tree level as input parameters of the MAC. These cells $t,s$ 
thus have the same radius $a$, i.e. they are translations of $[-a,a]^d$ in $\bb{R}^d$ (see section \ref{subsection_equispacedgrids}). The interpolation grids in $t,s$ can be expressed relatively to the cell centers, so the polynomial interpolation of $G$ on $t\times s$ can be seen as an interpolation of $\tilde{G}$ on $[-a,a]^{2d}$ using $\tilde{G}(\bo{x},\bo{y}) := G(ctr(t)+\bo{x},ctr(s)+\bo{y})$. Here, $\tilde{G}$ depends on $t\times s$.

We now introduce and prove the following theorem, regarding the interpolation process consistency on our equispaced grids.
\begin{theorem}
\label{theo_consistancy_ufmm}
Suppose that the condition $\cc{A}(t,s) = 1$ implies that $\tilde{G}$ is analytic in each variable at any point in $[-a,a]$ with a convergence radius $R$ such that $R>\frac{2a}{e}$. Then the Lagrange interpolation of $\tilde{G}$ on $t\times s$ using $L$ equispaced points in each variable, denoted by $\cc{I}_L^{t\times s}[\tilde{G}]$, verifies
\begin{equation*}
  \displaystyle\mathop{lim}_{L\rightarrow +\infty}\big|\big|\cc{I}_L^{t\times s}[\tilde{G}] -\tilde{G} \big|\big|_{L^\infty(t\times s)} = 0.
  \end{equation*}
\end{theorem}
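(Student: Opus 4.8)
The plan is to reduce the $2d$-dimensional claim to a one-dimensional estimate and then reassemble the error so that the exponentially growing Lebesgue constant of equispaced interpolation never enters. Write the tensor-product interpolant as $\mathcal{I}_L^{t\times s}=\bigotimes_{m=1}^{2d}\mathcal{I}_L^{(m)}$, where $\mathcal{I}_L^{(m)}$ is the one-dimensional equispaced interpolator acting in the $m$-th coordinate, and let $\mathcal{R}^{(m)}:=\mathrm{Id}-\mathcal{I}_L^{(m)}$ be the corresponding remainder. The pivotal ingredient is the elementary one-dimensional fact that, for a function analytic in a disk of radius $R>2a/e$ about each point of $[-a,a]$, the remainder $\|\mathcal{R}^{(m)}g\|_{L^\infty}$ tends to $0$; the threshold $2a/e$ is precisely the value at which the analytic decay of the Taylor remainder overtakes the growth of the nodal polynomial.

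First I would prove this one-dimensional estimate. With $L+1$ equispaced nodes $x_0<\dots<x_L$ on $[-a,a]$ of spacing $h=2a/L$, the classical bound
\[ \Big\|\textstyle\prod_{j=0}^{L}(\cdot-x_j)\Big\|_{L^\infty[-a,a]}\le \tfrac14\,h^{L+1}L! \]
combined with the Cauchy estimate $|g^{(L+1)}|/(L+1)!\le M_r/r^{L+1}$ (valid for each $r<R$, with $M_r$ the finite supremum of $g$ over the radius-$r$ complex neighbourhood of $[-a,a]$) turns the interpolation remainder formula into
\[ \|\mathcal{R}^{(m)}g\|_{L^\infty}\le M_r\,\varepsilon_L,\qquad \varepsilon_L:=\tfrac14\,L!\,\Big(\tfrac{2a}{L\,r}\Big)^{L+1}. \]
Stirling's formula yields $L!\,L^{-(L+1)}\sim\sqrt{2\pi/L}\,e^{-L}$, hence $\varepsilon_L\sim \tfrac14\sqrt{2\pi/L}\,\tfrac{2a}{r}\big(\tfrac{2a}{e\,r}\big)^{L}$, which decays geometrically exactly when $r>2a/e$. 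As $r$ may be chosen arbitrarily close to $R$ and $R>2a/e$, fixing $r\in(2a/e,R)$ gives $\varepsilon_L\to0$. This is the step that produces the constant $2a/e$.

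The reassembly into $2d$ variables is the delicate step, and the place where the Runge phenomenon must be defused. The naive telescoping $\mathrm{Id}-\prod_m\mathcal{I}_L^{(m)}=\sum_k(\prod_{j<k}\mathcal{I}_L^{(j)})\mathcal{R}^{(k)}$ is fatal, because the prefactors $\prod_{j<k}\mathcal{I}_L^{(j)}$ carry operator norms equal to powers of the equispaced Lebesgue constant, which grows like $2^L$ and would impose a dimension-dependent threshold. Instead I would use the inclusion--exclusion identity
\[ \mathrm{Id}-\bigotimes_{m=1}^{2d}\mathcal{I}_L^{(m)}=\mathrm{Id}-\prod_{m=1}^{2d}\big(\mathrm{Id}-\mathcal{R}^{(m)}\big)=\sum_{\emptyset\neq S\subseteq\{1,\dots,2d\}}(-1)^{|S|-1}\prod_{m\in S}\mathcal{R}^{(m)}, \]
in which \emph{only} remainder operators occur. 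Applying each $\prod_{m\in S}\mathcal{R}^{(m)}$ to $\tilde{G}$ via the Hermite--Genocchi (divided-difference) form of the remainder --- which makes the iteration clean, with no undetermined mean-value points --- and controlling the mixed derivative $\prod_{m\in S}\partial_{x_m}^{L+1}\tilde{G}$ by the polydisk Cauchy estimate (legitimate because $\tilde{G}$, a smooth function of $\mathbf{x}-\mathbf{y}$ on the well-separated pair, is jointly analytic there) gives
\[ \Big\|\prod_{m\in S}\mathcal{R}^{(m)}\tilde{G}\Big\|_{L^\infty}\le M_r\,\varepsilon_L^{\,|S|}. \]
The crucial feature is that every factor contributes a full power of $\varepsilon_L$ extracted from analyticity, with no Lebesgue constant anywhere. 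Summing over $S$,
\[ \big\|\mathcal{I}_L^{t\times s}[\tilde{G}]-\tilde{G}\big\|_{L^\infty(t\times s)}\le \sum_{k=1}^{2d}\binom{2d}{k}M_r\,\varepsilon_L^{\,k}=M_r\big((1+\varepsilon_L)^{2d}-1\big)\xrightarrow[L\to\infty]{}0, \]
which closes the argument. The single genuine obstacle is thus the bookkeeping that keeps the equispaced Lebesgue constant out of the estimate: once the error is expressed purely through remainder operators, the one-dimensional bound and the Cauchy estimates finish the proof with a threshold independent of the dimension.
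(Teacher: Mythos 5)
Your proposal is correct and follows essentially the same route as the paper: your inclusion--exclusion identity over nonempty subsets $S$, with bounds $\|\prod_{m\in S}\mathcal{R}^{(m)}\tilde{G}\|_\infty \le M_r\,\varepsilon_L^{|S|}$, is precisely the content of the M\"ossner--Reif product-interpolation theorem that the paper cites as its Theorem 2 (your subsets $S$ are its multi-indices $\alpha$, and your $\varepsilon_L$ is its $\omega_L\, L!/r^L$ up to normalization). The remaining ingredients --- the $\tfrac14 h^{L+1}L!$ nodal-polynomial bound, the Cauchy estimates for the mixed derivatives, Stirling's formula, and the choice $r\in(2a/e,R)$ yielding geometric decay --- coincide with the paper's Lemmas 2 and 3 and its final computation, the only difference being that you re-derive the multivariate decomposition rather than citing it.
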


      \subsection{Preliminary results}
      \label{subsection_dufmm_preliminaryresults}
      The idea of the proof of Theorem \ref{theo_consistancy_ufmm} consists in combining 1D convergence estimates with the interpolation error results provided in \cite{mossnerreif09} (restricted to our asymptotically smooth kernel functions). Our convergence proof differs from the one of \cite{darve09,messner12} because we cannot rely on the properties of the Chebyshev grids.  We denote by $\scr{C}^\infty(\Omega)$ the space of multivariate functions $f$ with bounded derivatives $\partial^{\beta}f := \partial^{\beta_1}f...\partial^{\beta_{2^d}}f$ on $\Omega$, $\forall \beta\in \bb{N}^{2d}$ such that $\displaystyle\mathop{max}_{k=1,...,2d}\beta_{k}\leq L$ for any domain $\Omega\subset \bb{R}^{2d}$. We denote by $\kk{A} := \{\alpha=(\alpha_1,...,\alpha_{2d})\in \{0,1\}^{2d},\hspace{0.02cm}\exists \hspace{0.02cm}j, \hspace{0.02cm}\alpha_j = 1\}$. We also use the notation $||f||_\infty := \mathop{sup}_{\bo{z}\in [-a,a]^{2d}}|f((\bo{z})|$.
      
      \begin{theorem}
        \label{stability_theo_multivariate}
        (\cite{mossnerreif09} Thm. 2.1) For $f\in \scr{C}^\infty([-a,a]^{2d})$, the product interpolation $\cc{I}^{[-a,a]^{2d}}_L[f]$ of $f$ in $[-a,a]^{2d}$ with the same 1D rule with $L$ interpolation nodes in each variable verifies
        \begin{equation*}
          \bigg|\bigg|f-\cc{I}^{[-a,a]^{2d}}_L[f]\bigg|\bigg|_{\infty} \leq \displaystyle\sum_{\alpha\in \kk{A}}\omega_L^{\bar{\alpha}}\Big|\Big|\partial^{\alpha L}f\Big|\Big|_{\infty}
        \end{equation*}
        where $\alpha L = (\alpha_1 L, ..., \alpha_{2d}L)$, $\bar{\alpha}:=\displaystyle\sum_{k=1}^{2d}\alpha_k$ and $\omega_L := \frac{1}{L!}\Big|\Big| \displaystyle\prod_{k=0}^{L-1}(\cdot - x_k) \Big|\Big|_{\infty}$, $x_k$ being the $k^{th}$ interpolation point of the 1D rule.
      \end{theorem}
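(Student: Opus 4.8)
The plan is to prove Theorem \ref{theo_consistancy_ufmm} by invoking the multivariate error estimate of Theorem \ref{stability_theo_multivariate} and then showing that the right-hand side vanishes as $L\to +\infty$. The key is to combine the structure of the bound (a finite sum over the index set $\kk{A}$) with two ingredients: an upper bound on the nodal-polynomial factor $\omega_L^{\bar\alpha}$ coming from equispaced nodes, and a Cauchy-estimate bound on the high-order derivatives $||\partial^{\alpha L}\tilde G||_\infty$ coming from the analyticity hypothesis with convergence radius $R>2a/e$. The interplay $R>2a/e$ is precisely what makes the product of these two competing factors decay, so the whole argument hinges on tracking exponential rates in $L$.

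First I would set $f=\tilde G$ in Theorem \ref{stability_theo_multivariate}, noting that the analyticity hypothesis guarantees $\tilde G\in\scr{C}^\infty([-a,a]^{2d})$, so the estimate applies. This reduces the goal to showing
\begin{equation*}
  \sum_{\alpha\in\kk{A}}\omega_L^{\bar\alpha}\,\big|\big|\partial^{\alpha L}\tilde G\big|\big|_\infty \xrightarrow[L\to+\infty]{} 0.
\end{equation*}
Since $\kk{A}$ is finite (its cardinality $2^{2d}-1$ does not depend on $L$), it suffices to show that each summand tends to $0$. Next I would bound the nodal polynomial. For $L$ equispaced nodes on $[-a,a]$, a standard estimate gives $\omega_L\le C\,(2a)^L/L^{L}$ up to lower-order factors, so that $\omega_L$ behaves like $(2a/L)^L$ times polynomial corrections; more precisely one uses $\omega_L \le \frac{(2a)^L}{L!}\cdot c_L$ with $c_L$ subexponential, and $L!\sim (L/e)^L$ by Stirling. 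This yields, for each fixed $\alpha$ with $\bar\alpha$ nonzero factors, $\omega_L^{\bar\alpha}\lesssim \big((2a/e)/L\cdot e\big)^{\bar\alpha L}$-type behavior once Stirling is inserted.

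Then I would bound the derivative term using Cauchy estimates. Because $\tilde G$ is analytic in each variable with convergence radius $R$ at every point of $[-a,a]$, the one-dimensional Cauchy inequality gives $|\partial_{z_j}^L \tilde G|\le M\,L!/\rho^L$ for any $\rho<R$ and some constant $M$ uniform on the compact $[-a,a]^{2d}$. Applying this in each of the $\bar\alpha$ directions indexed by $\alpha$ produces $||\partial^{\alpha L}\tilde G||_\infty\lesssim M^{\bar\alpha}(L!/\rho^L)^{\bar\alpha}$. Multiplying by the nodal bound, the two factorials and the $(2a)^L$ and $\rho^{-L}$ terms combine so that each summand is bounded by $\big(C\,2a/\rho\big)^{\bar\alpha L}$ times subexponential corrections, after the $L!$ factors cancel. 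Choosing $\rho$ close to $R$ and invoking $R>2a/e$ — together with the $1/L^{\bar\alpha L}$ decay left over from Stirling applied to $\omega_L$ — forces each summand to $0$.

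The main obstacle I expect is the careful bookkeeping of the exponential rates: I must verify that after cancelling the $L!$ from Cauchy against the $1/L!$ in $\omega_L$, the surviving ratio genuinely decays, which is where the threshold $2a/e$ enters. Concretely, the product of the nodal factor $(2a)^L/L!$ and the Cauchy factor $L!/\rho^L$ leaves $(2a/\rho)^L$; this alone does not decay unless $\rho>2a$, so the $e$ in the hypothesis must come from a sharper equispaced estimate $\omega_L\lesssim (2a)^L e^{-L}\,(\text{poly})$ rather than the crude $(2a)^L/L!$. I would therefore use the refined asymptotic $\omega_L\sim (2a)^L\,L!\,/(L^L\,e^{L})$-style bound (equivalently $\omega_L\lesssim (a/e)^L$ up to polynomial factors after normalization), so that the net rate becomes $(2a/(eR))^{\bar\alpha L}<1$, and take the limit. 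Ensuring the constants are uniform over the finite set $\kk{A}$ and over $[-a,a]^{2d}$, and that the chosen $\rho$ can be taken strictly between $2a/e$ and $R$, completes the argument.
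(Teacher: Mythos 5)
There is a fundamental mismatch here: your proposal does not prove the statement in question at all — it \emph{invokes} it. The statement is the multivariate interpolation error bound of M\"ossner and Reif (their Theorem 2.1), which the paper itself does not prove but imports as an external result. Your very first step is ``set $f=\tilde G$ in Theorem \ref{stability_theo_multivariate}'', after which everything you do (the equispaced nodal bound $\omega_L\le (2a/(L-1))^L/(4L)$, the Cauchy estimate $\|\partial^{\alpha L}\tilde G\|_\infty\lesssim (L!/r^L)^{\bar\alpha}$, Stirling, and the threshold $2a/e<R$) is a reconstruction of the paper's proof of its \emph{main} consistency theorem, Theorem \ref{theo_consistancy_ufmm}, via Lemmas \ref{stability_lemma_omega} and \ref{consistancy_lemma_cauchy}. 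As an argument for that other theorem your sketch is essentially the paper's own route (modulo some loose bookkeeping: the refined rate does not come from a sharper bound $\omega_L\lesssim (a/e)^L$ on the nodal polynomial, but from Stirling applied to the $L!$ in the Cauchy factor, so that $\omega_L\cdot L!/r^L\lesssim (2a/(re))^L$ up to subexponential terms — your own Lemma-2-style bound $\omega_L\le (2a/(L-1))^L/(4L)$ already suffices). But relative to the stated theorem, the proposal is circular: nothing in it establishes the inequality $\|f-\cc{I}^{[-a,a]^{2d}}_L[f]\|_\infty\le \sum_{\alpha\in\kk{A}}\omega_L^{\bar\alpha}\|\partial^{\alpha L}f\|_\infty$.

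A genuine proof of the statement requires a different idea entirely, and it is worth noting why the result is delicate: the na\"ive approach — writing the tensor-product error by telescoping $f-\cc{I}^{(1)}\cdots\cc{I}^{(2d)}f=\sum_j \cc{I}^{(1)}\cdots\cc{I}^{(j-1)}(\mathrm{Id}-\cc{I}^{(j)})f$ and bounding each term — picks up Lebesgue constants $\Lambda_L$ from the operator norms of the 1D interpolations, which grow exponentially for equispaced nodes and would destroy the estimate. The point of M\"ossner--Reif's theorem is precisely that no Lebesgue constant appears. Their proof achieves this by expressing the 1D remainder through divided differences, $f(x)-\cc{I}_L f(x)=[x_0,\dots,x_{L-1},x]f\cdot\prod_{k}(x-x_k)$, and observing that applying further 1D interpolation operators to such remainders produces \emph{mixed} divided differences, each bounded by $\|\partial^{\alpha L}f\|_\infty/(L!)^{\bar\alpha}$ times the nodal-polynomial factors — which is exactly how the sum over $\alpha\in\kk{A}$ with weights $\omega_L^{\bar\alpha}$ arises. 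None of this machinery appears in your proposal, so the stated theorem remains unproved.
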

      For equispaced grids, the constant $\omega_L$ can be bounded using the following lemma.
      \begin{lemma}
            \label{stability_lemma_omega}
            Let $\{x_k:=-a + 2ak/(L-1)\hspace{0.1cm}|\hspace{0.1cm}k\in [\![0,L-1]\!]\}\subset [-a,a]$ an interpolation grid of equispaced points. We have $\omega_L \leq \left(\frac{2a}{L-1}\right)^L/(4L).$
      \end{lemma}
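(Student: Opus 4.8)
The plan is to reduce the claim to the classical node-polynomial estimate for equispaced nodes and then simply divide by $L!$. Writing $h := 2a/(L-1)$ for the uniform spacing, so that $x_k = -a + kh$, I would first prove
\begin{equation*}
  \Big\|\prod_{k=0}^{L-1}(\cdot - x_k)\Big\|_{L^\infty([-a,a])} \;\le\; \frac{1}{4}\,h^{L}\,(L-1)!\,,
\end{equation*}
after which the assertion is immediate: since $\omega_L = \tfrac{1}{L!}\big\|\prod_{k=0}^{L-1}(\cdot-x_k)\big\|_\infty$, dividing the bound above by $L!$ gives $\omega_L \le \tfrac{(L-1)!}{4\,L!}\,h^L = \tfrac{1}{4L}\big(\tfrac{2a}{L-1}\big)^L$, which is exactly the stated inequality.

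To establish the node-polynomial estimate I would fix $x\in[-a,a]$; if $x$ is a node the product vanishes and nothing is to prove, so I may assume $x$ lies in the open subinterval $(x_j,x_{j+1})$ for some $j\in\{0,\dots,L-2\}$. The two factors coming from the nearest nodes are controlled by the elementary inequality $|x-x_j|\,|x-x_{j+1}|\le h^2/4$, obtained by maximizing $t\mapsto t(h-t)$ over $[0,h]$. For each remaining node I would bound the distance by the distance to the far endpoint of the bracketing interval: for $i\le j-1$ one has $|x-x_i| = x-x_i \le x_{j+1}-x_i=(j+1-i)h$, and for $i\ge j+2$ one has $|x-x_i| = x_i-x \le x_i-x_j=(i-j)h$. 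Multiplying these out yields $\prod_{i<j}|x-x_i|\le h^{j}(j+1)!$ and $\prod_{i>j+1}|x-x_i|\le h^{\,L-2-j}(L-1-j)!$, so that altogether $\big|\prod_k(x-x_k)\big|\le \tfrac14\, h^{L}\,(j+1)!\,(L-1-j)!$, the exponent of $h$ correctly summing to $2+j+(L-2-j)=L$.

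The last step, and the only point requiring genuine care, is to show that this $j$-dependent factorial product never exceeds $(L-1)!$, i.e. that $(j+1)!\,(L-1-j)!\le (L-1)!$ for every admissible $j$; this is what makes the bound uniform in $x$. Rewriting $(j+1)!\,(L-1-j)! = (L-1-j)\,(L-1)!/\binom{L-1}{j+1}$, the claim is equivalent to the elementary binomial inequality $\binom{L-1}{j+1}\ge L-1-j$, which I would verify directly from the fact that $\binom{m}{s}\ge s+1$ for $0\le s\le m-1$ (applied with $m=L-1$ and $s=L-2-j$, using $\binom{m}{s}=\binom{m}{m-s}$). With this the estimate holds uniformly in $j$, hence in $x$, and the lemma follows. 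The main obstacle is thus purely combinatorial — arranging the distance bounds so that the surviving factorials collapse to $(L-1)!$ — rather than analytic, since no special property of the interpolation beyond equal spacing is used.
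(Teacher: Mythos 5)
Your proof is correct, and it shares the paper's broad strategy --- bound the node polynomial $\prod_{k}(\cdot-x_k)$ pointwise, extract the factor $h^2/4$ (with $h=2a/(L-1)$) from the two bracketing nodes, control the remaining factors by factorials, and divide by $L!$ at the end --- but the decisive step is handled differently. The paper rescales to integer nodes (setting $z=y(L-1)\in[0,L-1]$), asserts that $\prod_{j}|z-j|$ attains its maximum in the outermost subinterval, and then bounds the product only there by $(L-1)!/4$; that localization claim is dispatched with the phrase ``using a simple recurrence'' and never proved. You avoid needing it at all: you bound the product on \emph{every} bracketing subinterval $(x_j,x_{j+1})$, obtaining the $j$-dependent estimate $\tfrac{1}{4}h^{L}(j+1)!\,(L-1-j)!$, and then collapse it to the uniform bound $\tfrac{1}{4}h^{L}(L-1)!$ via the binomial inequality $\binom{L-1}{j+1}\ge L-1-j$, which your reduction to $\binom{m}{s}\ge s+1$ (for $0\le s\le m-1$, using symmetry of binomial coefficients) establishes correctly; the exponent count $j+2+(L-2-j)=L$ and the final division $(L-1)!/L!=1/L$ also check out. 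The trade-off is clear: the paper's route is shorter if one grants the classical fact about where the equispaced node polynomial peaks, while yours is fully self-contained --- every step is an elementary distance or counting estimate --- at the modest price of a combinatorial lemma, and it deliberately proves something slightly weaker (a bound valid on each subinterval rather than the location of the true maximum), which is all the lemma requires.
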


      \begin{proof}        
        Let $x\in [-a,a]$. We have $
            \Big|\displaystyle\prod_{j=0}^{L-1}(x - x_j)\Big| = (2a)^L\Big|\displaystyle\prod_{j=0}^{L-1}\left(\frac{x+a}{2a} - \frac{j}{(L-1)}\right)\Big|$.
        Let $y := \left(\frac{x+a}{2a}\right) \in [0,1]$. This leads to $\Big|\displaystyle\prod_{j=0}^{L-1}(x - x_j)\Big| \leq \left(\frac{2a}{L-1}\right)^L\Big|\displaystyle\prod_{j=0}^{L-1}y(L-1) - j\Big|$. Because $y(L-1) \in [0,(L-1)]$, $\Big|\displaystyle\prod_{j=0}^{L-1}y(L-1) - j\Big|$ is maximal for $y\in (0,1/(L-1))$. Using a simple recurrence, one may easily show that $\displaystyle\prod_{j=0}^{L-1}\Big|y(L-1) - j\Big| \hspace{0.2cm}\leq \frac{(L-1)!}{4}$,
        which implies that $
            \omega_L \leq \frac{1}{L!}\left(\frac{2a}{L-1}\right)^L(L-1)!/4\hspace{0.2cm}\leq \left(\frac{2a}{L-1}\right)^L/(4L)$.
      \end{proof}

      We now want to bound the partial derivatives of the interpolated function. This is the purpose of the following lemma.
      \begin{lemma}
        \label{consistancy_lemma_cauchy}
	If $f$ is analytic in all its variables at any point of $[-a,a]$ with a convergence radius $R>0$, we have $||\partial^{\alpha L}f||_\infty \leq \frac{C}{r^{2d}}\inpar{\frac{L!}{r^L}}^{\bar{\alpha}}$ 
	with $0 < r < R$, $\bar{\alpha}:=\sum_{k=1}^{2d}\alpha_k$, $C\in \bb{R}^{*+}$ being a constant independent of $L$, and $\alpha$ being defined as in Thm. \ref{stability_theo_multivariate}.
      \end{lemma}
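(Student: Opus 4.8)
The plan is to control every partial derivative through Cauchy's estimate for holomorphic functions, using that on a well-separated pair $(t,s)$ the asymptotically smooth kernel $\tilde{G}$ extends holomorphically in each variable to a complex neighbourhood of $[-a,a]$. Fix a radius $r$ with $0<r<R$. Since $f$ is analytic in each variable at every point of $[-a,a]$ with convergence radius $R>r$, each univariate slice of $f$ extends to a holomorphic function on the closed complex disc of radius $r$ centred at the corresponding real point, and these extensions are bounded on the resulting compact complex neighbourhood of $[-a,a]^{2d}$.

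First I would invoke the one-dimensional Cauchy estimate: a function $g$ holomorphic on the closed disc of radius $r$ about $x_0$ satisfies $|g^{(L)}(x_0)| \le (L!/r^L)\sup_{|z-x_0|=r}|g(z)|$. For $\alpha\in\kk{A}$, the operator $\partial^{\alpha L}$ takes $L$ derivatives in each of the $\bar\alpha=\sum_k\alpha_k$ variables with $\alpha_k=1$ and none in the others. Applying the univariate estimate successively in these $\bar\alpha$ active variables (freezing the inactive ones at their real values) contributes one factor $L!/r^L$ per active variable and turns the pointwise value of $|f|$ into a supremum over a product of circles of radius $r$. Hence, at any $\bo{z}_0\in[-a,a]^{2d}$,
\[
  |\partial^{\alpha L}f(\bo{z}_0)| \le \inpar{\frac{L!}{r^L}}^{\bar\alpha} M_r,
\]
where $M_r$ denotes the supremum of $|f|$ over the holomorphic extension on the fixed compact complex neighbourhood of radius $r$ about $[-a,a]^{2d}$.

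It then remains to observe that $M_r$ is finite and does not depend on $L$. As $r<R$, the holomorphic extension lives on a fixed compact set and is continuous there, so $|f|$ attains a finite maximum $M_r$ depending only on $f$, $r$, $a$ and $d$, and in particular independent of both $L$ and $\alpha$. Taking the supremum over $\bo{z}_0\in[-a,a]^{2d}$ and setting $C:=r^{2d}M_r$ yields $||\partial^{\alpha L}f||_\infty \le (C/r^{2d})(L!/r^L)^{\bar\alpha}$, which is the assertion; note that this constant is exactly what later multiplies the convergent sum when Lemma \ref{stability_lemma_omega} and Theorem \ref{stability_theo_multivariate} are combined.

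The delicate point is legitimising the iterated estimate: to differentiate $\partial^{L}f$ once more in a second variable, this derivative must still be holomorphic in that variable. I would resolve this by upgrading separate analyticity on the polydisc to joint analyticity via Hartogs' theorem, so that every $\partial^{\alpha L}f$ is itself holomorphic; the successive Cauchy estimates — or, equivalently, a single multivariate Cauchy integral formula over the $\bar\alpha$ active circles — are then fully justified. The only remaining care concerns the uniformity of $M_r$ over all base points of $[-a,a]^{2d}$, which follows from compactness once the common complex neighbourhood has been fixed.
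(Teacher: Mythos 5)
Your proof is correct and follows essentially the same route as the paper's: the paper applies the multivariate Cauchy integral formula on the contours $\Gamma_j=\{\xi\in\bb{C}\,:\,\mathrm{dist}(\xi,[-a,a])=r\}$ with $0<r<R$, differentiates the Cauchy kernel to produce the factor $\prod_{k=1}^{2d} L!^{\alpha_k}r^{-L\alpha_k-1}=\inpar{L!/r^L}^{\bar{\alpha}}/r^{2d}$, and absorbs the supremum of $|f|$ over the product of contours into the $L$-independent constant $C$, which is exactly what your iterated one-dimensional Cauchy estimates (equivalently, the polydisc Cauchy formula) accomplish. Your explicit appeal to Hartogs' theorem to pass from separate to joint analyticity only makes rigorous a step the paper leaves implicit.
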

      
      \begin{proof}
	Since $f$ is analytic in all its variables, we can apply the Cauchy integral formula (see \cite{lars79}, Chapter 2, Theorem 6), allowing us to write:
	\begin{equation*}
	  \begin{aligned}
	    f(\bo{p}) &= \left(\frac{1}{2\pi i}\right)^{2d}\int_{\Gamma_1} ...\int_{\Gamma_{2d}} \frac{f(\bo{z})}{(z_1-p_1)...(z_{2d}-p_{2d})} dz_1 ...dz_{2d},
	  \end{aligned}
	\end{equation*}
	where $\Gamma_j:=\{\xi\in \bb{C}\hspace{0.1cm}|\hspace{0.1cm}dist(\xi,[-a,a]=r)\}$.
	We thus have:
	\begin{equation*}
	  \begin{aligned}
	    \Big|\Big|\partial^{\alpha L}_{\bo{p}}f(\bo{p})\Big|\Big|_\infty &\leq \left(2\pi\right)^{-2d}\int_{\Gamma_1} ...\int_{\Gamma_{2d}} f(\bo{z})\Big|\Big|\partial_{\bo{p}}^{\alpha L}\frac{1}{(z_1-p_1)...(z_{2d}-p_{2d})}\Big|\Big|_\infty dz_1 ...dz_{2d}\\
	    &\leq \frac{|\Gamma_1| ...|\Gamma_{2d}|}{\left(2\pi\right)} \left( \mathop{sup}_{\bo{z}\in\Gamma_1\times ...\times \Gamma_{2d}}\Big|f(\bo{z})\Big|\right) \Big|\Big|\partial_{\bo{p}}^{\alpha L}\frac{1}{(z_1-p_1)...(z_{2d}-p_{2d})}\Big|\Big|_\infty.
	  \end{aligned}
	\end{equation*}
	The term $\left( \mathop{sup}_{\bo{z}\in\Gamma_1\times ...\times \Gamma_{2d}}|f(\bo{z})|\right)$ is bounded thanks to the analyticity of $f$ on a neighborhood of $[-a,a]^{2d}$ encompassing the $\Gamma_j$'s (with a convergence radius equal to $R$). Hence, there exists a constant $M(\Gamma_1,...,\Gamma_{2d})\in \bb{R}^{+*}$ such that, defining 
	\begin{equation*}
	    C(\Gamma_1,...,\Gamma_{2d}) := \left(2\pi\right)^{-2d}|\Gamma_1| ...|\Gamma_{2d}|M(\Gamma_1,...,\Gamma_{2d}),
	\end{equation*}
	where $|\Gamma_j |$ denotes the length of the path $\Gamma_j$, $j\in [\![1,2d]\!]$, with $\bar{\alpha}:=\displaystyle\sum_{k=1}^{2d}\alpha_k$, we have
	\begin{equation*}
	  \begin{aligned}
	    \Big|\Big|\partial^{\alpha L}f(\bo{p})\Big|\Big|_\infty &\leq C(\Gamma_1,...,\Gamma_{2d}) \Big|\Big|\partial^{\alpha L}\frac{1}{(z_1-p_1)...(z_{2d}-p_{2d})}\Big|\Big|_\infty\\
	    (\textit{Since }\alpha_k=0,1)&\leq C(\Gamma_1,...,\Gamma_{2d}) \displaystyle \prod_{k=1}^{2d}\left( L!^{\alpha_k}\Big|\Big|(z_k-p_k)\Big|\Big|_\infty^{-L\alpha_k-1} \right)\\
	    &\leq C(\Gamma_1,...,\Gamma_{2d})\underbrace{\displaystyle \prod_{k=1}^{2d}\left( L!^{\alpha_k}r^{-L\alpha_k-1} \right)}_{=\inpar{\frac{L!}{r^L}}^{\bar{\alpha}}/(r^{2d})}.
	  \end{aligned}
	\end{equation*}
        ~
        
    \end{proof} 

      \subsection{Proof of the main theorem}
      \label{subsection_dufmm_proofofthemaintheorem}
      We can now prove Thm. \ref{theo_consistancy_ufmm}.
      \begin{proof}
        Following Lem. \ref{stability_lemma_omega} and Thm. \ref{stability_theo_multivariate}, we have,
        \begin{equation*}
          \begin{aligned}
            \Big|\Big|\cc{I}_L^{t\times s}[\tilde{G}] - \tilde{G}\Big|\Big|_{\infty} &\leq \sum_{\alpha\in \kk{A}}\left(\prod_{k=1}^{2d}\omega_L^{\alpha_k}\right)\Big|\Big|\partial^{\alpha L}\tilde{G}\Big|\Big|_{\infty}\\
            &\leq \sum_{\alpha\in \kk{A}}
            \prod_{k=1}^{2d}\left(\left( \frac{2a}{L-1}\right)^L/(4L)\right)^{\alpha_k}
            \Big|\Big|\partial^{\alpha L}\tilde{G}\Big|\Big|_{\infty}
          \end{aligned}
        \end{equation*}
        which becomes, thanks to Lem. \ref{consistancy_lemma_cauchy}
        \begin{equation*}
          \begin{aligned}
            \Big|\Big|\cc{I}_L^{t\times s}[\tilde{G}] - \tilde{G}\Big|\Big|_{\infty}
            &\leq C\sum_{\alpha\in \kk{A}}
            \prod_{k=1}^{2d}\left(\left( \frac{2a}{L-1}\right)^L
            \frac{L!}{4r^LL}\right)^{\alpha_k}r^{-1}.
          \end{aligned}
        \end{equation*}
        Now, by applying Stirling's inequality $L!\leq e^{-(L-1)}L^{L+1/2}$, one obtains
        \begin{equation*}
          \begin{aligned}
            \Big|\Big|\cc{I}_L^{t\times s}[\tilde{G}] - \tilde{G}\Big|\Big|_{\infty}
            &\leq C\sum_{\alpha\in \kk{A}}
            \prod_{k=1}^{2d}\left(\left( \frac{2a}{L-1}\right)^L
            \frac{e^{-(L-1)}L^{L+1/2}}{4r^LL}\right)^{\alpha_k}r^{-1}\\
            &\leq C\sum_{\alpha\in \kk{A}}
            \prod_{k=1}^{2d}\left(\left( \frac{2a}{L-1}\right)^L
            \frac{eL^{L+1/2}}{4(er)^LL}\right)^{\alpha_k}r^{-1}\\
            &\leq C\sum_{\alpha\in \kk{A}}
            \prod_{k=1}^{2d}\left(\left( \frac{2a}{re}\right)^L
            \left(\frac{L}{L-1}\right)^L\frac{e}{4\sqrt{L}}\right)^{\alpha_k}r^{-1}.
          \end{aligned}
        \end{equation*}
        For $L\geq 2$, we have $\left(\frac{L}{L-1}\right)^L \leq 4$, which allows to write
        \begin{equation*}
          \begin{aligned}
            \Big|\Big|\cc{I}_L^{t\times s}[\tilde{G}] - \tilde{G}\Big|\Big|_{L^{\infty}([-a,a]^{2d})}
            &\leq C\sum_{\alpha\in \kk{A}}
            \prod_{k=1}^{2d}\left(\left( \frac{2a}{re}\right)^L
            \frac{e}{\sqrt{L}}\right)^{\alpha_k}r^{-1}\\
            &\leq Cr^{-2d}\sum_{\alpha\in \kk{A}}
            \underbrace{\prod_{k=1}^{2d}\left(\left( \frac{2a}{re}\right)^L\frac{e}{\sqrt{L}}\right)^{\alpha_k}}_{=\inpar{\frac{2a}{re}}^{\bar{\alpha} L}\inpar{\frac{e}{\sqrt{L}}}^{\bar{\alpha}}}
          \end{aligned}
        \end{equation*}
        using $\bar{\alpha}=\displaystyle\sum_{k=1}^{2d}\alpha_k$. 
        The number of terms in the sum over $\kk{A}$ 
        is finite and depends only on the dimension: there is indeed  $2^{2d}-1$ terms in this sum since $\alpha=(0,...,0)$ does not verify $||\alpha ||_\infty=1$. 
        This estimate thus tends to zero when $L$ tends to infinity if $\frac{2a}{re} < 1$. Since $r < R$, this is verified if $\frac{2a}{e} < R$. Indeed, $\inpar{\frac{e}{\sqrt{L}}}^{\bar{\alpha}} \leq \inpar{\frac{e}{\sqrt{2}}}^{2d}$ since we assumed that $L \geq 2$. In addition, each $\inpar{\frac{2a}{re}}^{\bar{\alpha} L}$ can be bounded by $\inpar{\frac{2a}{re}}^{L}$ since $\frac{2a}{re} < 1$. We finally have
        \begin{equation}
          \label{convergence_estimate_dufmm}
          \begin{aligned}
            \Big|\Big|\cc{I}_L^{t\times s}[\tilde{G}] - \tilde{G}\Big|\Big|_{L^{\infty}([-a,a]^{2d})}
            &\leq \underbrace{\inpar{Cr^{-2d}(2^{2d}-1)\inpar{\frac{e}{\sqrt{2}}}^{2d}}}_{\text{Does not depend on }L}\underbrace{\left( \frac{2a}{re}\right)^L}_{\displaystyle\mathop{\rightarrow}_{L\rightarrow +\infty}0}.
          \end{aligned}
        \end{equation}
      \end{proof}
      
      This proof has a geometric interpretation. In the inequality (\eqref{convergence_estimate_dufmm}), the term $\left( \frac{2a}{re}\right)^L$ somehow corresponds to a MAC such as in \cite{denen02}: $a$ refers to the radius of an interacting cell and $r$ is related to the distance between the interacting cells. 
      The greater this distance, the greater $r$ and the better this estimate. Another information we obtain from the inequality (\eqref{convergence_estimate_dufmm}) is that the convergence should be geometric in the 1D interpolation order. Indeed, considering $\theta := \frac{radius(t)+radius(s)}{dist(t,s)}$, 
      Estimate (\ref{convergence_estimate_dufmm}) somehow indicates that the error of the interpolation process is $\cc{O}\left(\left(\frac{\theta}{e}\right)^L\right)$. In practice, our new directional MAC (defined in section \ref{sss:DTT} below), as well as the MAC from \cite{darve09,messner12,messnerINRIA} (referred to as the \textit{strict} MAC\footnote{Two cells at the same $2^d$-tree level comply with the strict MAC if the distance between them is greater or equal than their side length.} \label{def:strictMAC} in this article), both verify the assumptions of theorem \ref{theo_consistancy_ufmm}: the interpolation process of $G$ therefore converges on such well-separated sets.
      
      We conclude this section by two important remarks. First, the practical efficiency of the interpolation-based FMM relies on the implicit assumption that the constant $C(\Gamma_1,...,\Gamma_{2d})$ is small. This is the case when using the strict MAC in the low-frequency regime (a justification can be found in \cite{messner12}) 
      and our new directional MAC (see section \ref{sss:DTT}) in the high-frequency one. 
      Second, the ill-conditioning of such interpolation on equispaced grids may cause an exponential amplification of floating-point rounding errors at  
      the boundaries of the interpolation domain (see \cite{smith06,plattetrefethenkuijlaars09}). We thus cannot expect a numerical convergence for 
      any order in practice. Nevertheless, according to our tests (not shown here), we can reach a relative error of 
      $10^{-12}$ on 3D particle distributions with double-precision arithmetic before facing 
      numerical instabilities. Practical applications usually require much less accurate approximations.

\section{\textit{defmm}: a directional equispaced interpolation-based FMM}
\label{section_defmm}
In this section, we present our new \textit{defmm} library,  
integrating polynomial  interpolations  on  equispaced  grids within a directional kernel-independent FMM.

\subsection{Directional FFT-based FMM}
The integration of FFT techniques exploiting equispaced grids in a directional polynomial interpolation-based FMM requires operations between various directional expansions to be converted in the Fourier domain. 

\subsubsection{M2F and F2M operators}
As described in section \ref{section_presentationFMM}, 
the M2L application on equispaced grids 
involves three steps: the conversion of the involved multipole expansion (extended by zero-padding)
into the Fourier domain, the application of the diagonal M2L operator in this domain, and then the conversion of the resulting (extended) local expansion in the Fourier domain into a local expansion. 
We thus introduce two extra operators, the M2F (multipole-to-Fourier) and F2L (Fourier-to-local) ones, applying the matrices $\bb{F}\chi$ to multipole expansions in the real domain and $\chi^T\bb{F}^*$ to local expansions in the Fourier domain. Let $T$ be a M2L matrix, we have (see section \ref{subsection_equispacedgrids})
\begin{equation*}
    T = \underbrace{\inpar{\chi^\top\bb{F}^*}}_{\text{F2L}}\hspace{0.2cm}\underbrace{\bb{D}_{s,t}}_{\text{Diagonal M2L}} \hspace{0.2cm}\underbrace{\inpar{\bb{F}\chi}}_{\text{M2F}}.
\end{equation*} 

In Figure \ref{fig_linkbetweenexpansions}, we depict how the different operators are applied 
in \textit{defmm}. The operators are 
non-directional (i.e. without the terms in \cgrey{grey} in section \ref{subsection_interpolationbasedFMM}) in the low-frequency regime, and directional in the high-frequency one.  

\begin{figure}[t]
    \centering
    \includegraphics[width=0.8\linewidth]{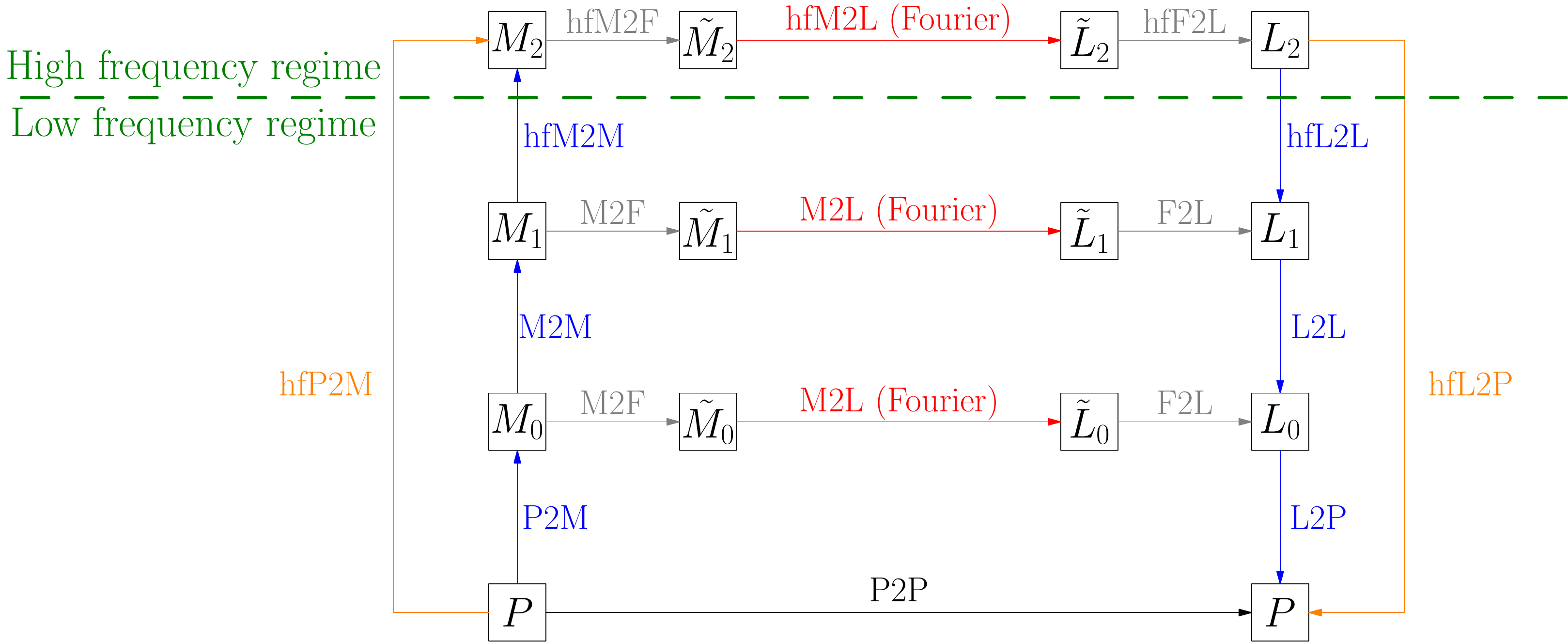}
    \caption{\label{fig_linkbetweenexpansions}Operators and expansions in \textit{defmm}. \texttt{hf} is added in front of the directional operators (used in the high-frequency regime). P: particles; 
    $M_i$: multipole expansions; $L_i$: local expansions; $\tilde{M}_i$,$\tilde{L}_i$: multipole and local expansions in the Fourier domain. 
    }
\end{figure}

\subsubsection{Direction generation}
\label{subsubsection_directiongeneration}
In the directional FMM presented in \cite{engquistying} the symmetries within 
the sets of directions at each tree level are exploited to reduce the number of precomputed M2L matrices. 
This however constrains the  way the directions are generated. On the contrary, using a directional interpolation-based approach the M2L matrices do not depend on the directions (see section \ref{subsection_interpolationbasedFMM}). We thus exploit a similar construction algorithm as presented in \cite{engquistying,borm15}: starting from a given regular solid, the set of projected face centers on the unit sphere are chosen as directions at the first high-frequency level. Then, each face is subdivided into $2^{d-1}$ other faces and the process is repeated recursively in order to obtain the direction tree. 
Engquist \& Ying \cite{engquistying} relies on the $d$-cube for the direction generation: according to our tests, the $d$-cube provides indeed the best compromise between the number of directions and the obtained 
accuracy among the platonic solids. 
Since, a $d$-cube has $d!$ faces, there are $d!2^{E(d-1)}$ directions at the $E^{th}$ high-frequency level in \textit{defmm}.

\subsection{Algorithmic design}
\label{ss:design}
\subsubsection{$2^d$-tree construction}
\label{ss:tree_construction}
$2^d$-trees can be built either by setting the maximum number of particles per leaf (denoted as $Ncrit$),
or by setting the maximum possible level (denoted as $MaxDepth$). 
For (highly) non-uniform particle distributions, the $MaxDepth$ strategy leads to (numerous) empty cells,
which should not be stored and processed in practice: this requires advanced data structures 
(see e.g. \cite{naborskorsmeyerleightonwhite94,coulaudfortinroman10,hariharanalurushanker02}). 
The $MaxDepth$ strategy may also generate   
cells with a greatly varying number of particles
\cite{pavlovandonovkremenliev14}. 
Moreover, a comparison of hierarchical methods in \cite{fortinathanassoulalambert11}
has detailed how the $Ncrit$ strategy better adapts to highly non-uniform distributions than the $MaxDepth$ one. 
Considering the highly non-uniform distributions used in BIE problems, we therefore choose to rely on the $Ncrit$ 
strategy in \textit{defmm}.
This 
differs from  
\textit{dfmm} 
\cite{messner12,messnerINRIA} which relies on the $MaxDepth$ strategy.
 
\label{subsubsection_treeconstructionanddatalocality}
Concretely, our $2^d$-tree construction algorithm is similar to the 
\textit{exafmm}\footnote{State-of-the-art parallel C++ (non-directional) FMM library: \url{https://github.com/exafmm/exafmm}} 
one: the particles are first sorted according to the Morton ordering, 
then each cell is built 
with a pointer on its first particle.  
The particles within a cell 
are thus stored consecutively in memory and followed by  
the particles of the next cell in  
the Morton ordering.  
This ensures that the data is localized for the P2P evaluations. In addition, since the particles of the sons of a given cell are stored consecutively, 
we can directly 
apply P2P operators on non-leaf cells. 
The charges and potentials are stored in two dedicated arrays whose entries correspond to the sorted particles for the same data-locality reason.

We also store all the directional expansions associated to a given cell $c$ in the same array to enhance data locality during the translations of local and multipole expansions: the M2M and L2L evaluations regarding $c$  
are indeed performed alltogether (see section \ref{subsection_blasbasedoperatorsoptimization} for the optimizations of these steps).

\subsubsection{Dual Tree Traversal}
\label{sss:DTT}

\begin{algorithm}[t]
    \caption{\label{alg_dtt}Dual Tree Traversal (DTT) between target cell $t$ and source cell $s$}
    \begin{algorithmic}[1]
        \STATE 
        \COMMENT{$\cc{A}$:  
        strict (resp. directional) MAC in  
        low- (resp. high-) frequency regime}  
        \IF{$\cc{A}(t,s)$} 
            \STATE apply the M2L operator between $t$ and $s$
            \RETURN
        \ELSE
            \IF{$t$ is a leaf \textbf{or} $s$ is a leaf}
                \STATE apply the P2P operator between $t$ and $s$
                \RETURN
            \ENDIF
            \FOR{$t'\in Sons(t)$}
                \FOR{$s'\in Sons(s)$}
                    \STATE DTT($t',s'$)
                \ENDFOR
            \ENDFOR
        \ENDIF
    \end{algorithmic}
\end{algorithm}

Using the $Ncrit$ criterion 
however complexifies the interaction list structure (see lists U, V, W and X in e.g. \cite{yingbiroszorin04}). 
In directional FMMs, the DPSC for well-separate\-ness (see equation (\ref{e:DPSC})) further complicates the construction of the interaction lists (multiple lists for each target cell with varying sizes and shapes depending on the tree level). We therefore adapt here the Dual Tree Traversal (DTT) \cite{denen02,yokota13} to directional FMMs. 
The DTT is a simple and recursive procedure that simultaneously traverses the target and source trees to evaluate \textit{on-the-fly} the M2L and P2P operators by dynamically testing the MAC on pairs of target and source cells. Hence, the interaction lists are traversed and evaluated implicitly but never built. This is another 
difference 
with \textit{dfmm} which relies on a DTT-like algorithm to \textit{explicitly} build the interaction lists during a precomputation step, and then separately processes these lists for each target cell. 

Since FFTs can only efficiently accelerate M2L operations between cells at the same level (because of the relatively small involved circulant embedding), 
our DTT differs from the original one \cite{denen02,yokota13} as follows: 
(i) when the MAC fails, both cells are split (not the largest one); 
(ii) a P2P operation is required as soon as one of the two cells is a leaf (not both).  

We also aim at one single DTT algorithm for both the low- and high-frequency regimes. 
This requires the MAC to depend only on the cell radii and distances, and not on directions. 
Fortunately, this is possible in a directional polynomial interpolation-based FMM, since the M2L matrices do not depend on the directions. 
Our DTT hence relies in the low-frequency regime on the strict MAC (see section \ref{def:strictMAC}), which is simple 
and allows the most far-field interactions for each target cell, 
and on the following new directional MAC in the high-frequency regime (for wavenumber $\kappa$) between cells $t,s$ with radius $w$ at the same tree level:  
\begin{equation}
\label{macdirref}
    \frac{max\{\kappa w^2,2w\}}{dist(t,s)}\leq \eta,
\end{equation}
where $\eta > 0$ ($\eta=1$ in practice). This new directional MAC is similar to another directional MAC \cite{borm15}, but contrary to this latter our MAC does not depend on the directed wedge condition: we do not consider directions in this MAC but only ratios between cell radii and the cell distance. This means that the DTT performs in the high-frequency regime in a similar way than in the low-frequency one, without considering the directional aspects that are entirely hidden in the M2L applications.
We emphasize that the interpolation process using equispaced grids is consistant according to section \ref{section_consistency} on cells complying with these two MACs. 
When this directional MAC is satisfied, one then has to retrieve the best direction 
to select the 
directional multipole expansion (from source $s$) and the directional local expansion (from target $t$) which are relevant for the M2L operation between $t$ and $s$.
The search of the best direction may have a non-negligible cost, but there is one single 
direction associated to each  
M2L matrix. 
We thus precompute these best directions during the M2L matrix precomputation step 
(see  
section \ref{subsubsection_blankpasses}). 

In the end, we obtain the simple DTT algorithm presented in Algorithm \ref{alg_dtt} to differentiate in \textit{defmm} 
the far-field M2L operations from the near-field P2P ones, in both low- and high-frequency regimes.

One may notice that list-based approaches (i.e. without DTT) allow to group or "stack" 
multiple M2L operations (written as matrix-vector products), 
into matrix-matrix products (see e.g. \cite{coulaudfortinroman10,messnerINRIA,malhotrabiros16}). 
This enables indeed to benefit from the higher efficiency of level-3 BLAS routines, especially for uniform distributions.
We refer to this technique as  \textit{vector stacking}. 
Here however, we have M2L operations corresponding to Hadamard products whose grouping cannot lead to more efficient level-3 BLAS operations. 
Such Hadamard products may be converted to matrix-vector products as shown in  \cite{malhotrabiros16}, but this requires extra zeros for non-uniform particle distributions. Considering the highly non-uniform distributions typical of BIE problems, we believe that the potential gain would be too limited. 
We hence do not consider vector stacking for our Hadamard products, and we rather rely on the DTT to efficiently process the BIE non-uniform distributions.  

\subsubsection{Blank passes}
\label{subsubsection_blankpasses}

\begin{algorithm}[t]
    \caption{\label{alg_blankdtt}Blank Dual Tree Traversal (BDTT) between target cell $t$ and source cell $s$}
    \begin{algorithmic}[1]
        \IF{$t$ is in the high-frequency regime} 
        \IF{$\cc{A}(t,s)$}
            \STATE  
            \COMMENT{$\cc{D}(Level(t))$ is the level of $t$ in the direction tree:\,    }
            \STATE compute direction $u:=\displaystyle\mathop{argmin}_{v\in
            \cc{D}(Level(t))}\Big|v-\frac{ctr(t)-ctr(s)}{|ctr(t)-ctr(s)|}\Big|$
            \STATE mark $t$ and $s$ with $u$ \label{alg_blankdtt:mark_u}
            \IF{the M2L matrix $\bb{D}_{s,t}$ corresponding to $t$ and $s$ is not precomputed}
                \STATE precompute $\bb{D}_{s,t}$ 
            \ENDIF
            \STATE mark $\bb{D}_{s,t}$ with $u$
            \RETURN
        \ELSE
            \FOR{$t'\in Sons(t)$}
                \FOR{$s'\in Sons(s)$}
                    \STATE BDTT($t',s'$)
                \ENDFOR
            \ENDFOR
        \ENDIF
        \ENDIF
    \end{algorithmic}
\end{algorithm}

\begin{algorithm}[t]
    \caption{\label{alg_blankdwnpass}Blank Downward Pass (BDP) for cell $c$}
    \begin{algorithmic}[1]
        \IF{$c$ is in the high-frequency regime}
        \FOR{each direction $u$ with which $c$ is marked}
            \FOR{$c'\in Sons(c)$} 
                \STATE mark $c'$ with $Father(u)$
            \ENDFOR
        \ENDFOR
        \FOR{$c'\in Sons(c)$}
            \STATE BDP($c'$)
        \ENDFOR
        \ENDIF
    \end{algorithmic}
\end{algorithm}

On non-uniform distributions 
only a subset of all possible directional expansions will be used in practice due to the DPSC. 
Hence, \textit{defmm} determines the actually needed directional expansions (named \textit{effective} expansions) during a precomputation step and only computes and stores these effective expansions to save memory and computations.  
In this purpose,  
\textit{defmm} performs first a \textit{blank DTT} (see Algorithm \ref{alg_blankdtt}) 
to compute each required M2L matrix and to mark all M2L-interacting cells with the corresponding direction
(see line \ref{alg_blankdtt:mark_u} in Algorithm \ref{alg_blankdtt}). 
Then, thanks to a  
\textit{blank downward pass} (see algorithm \ref{alg_blankdwnpass}), the required directions are propagated down to the leaves of the $2^d$-trees.

\subsection{Exploiting symmetries in the Fourier domain}
\label{ss:FFT_symmetries}

To minimize the number of M2L matrices to be precomputed, in the case of centered expansions
in $2^d$-trees, it has been observed in the literature that 
symmetries can be used
(see for instance \cite{messnerINRIA,engquistying}). The underlying symmetry group is actually
the hyperoctahedral one \cite{theseigor}
(i.e. the octahedral group in 3D, corresponding to the symmetry group of the cube), denoted
by $\kk{D}_d$ in dimension $d$, that can be realized as 
a group of rotation matrices. The use of
symmetries strongly reduces the precomputation cost, especially in the high-frequency regime.
Hence, we want to also exploit these symmetries in \textit{defmm}. To do so, one has to express
these symmetries in the Fourier domain, since the (modified diagonal) M2L matrices are expressed
in this domain.

\subsubsection{Taking symmetries into account}

Let us first detail how, in a given cell $c$ of the (source or target) cluster tree, a permutation
matrix over the interpolation grid $\Xi_c$ can be associated to a symmetry. Take any rotation
$R:\mathbb{R}^d\to \mathbb{R}^d$ that leaves the unit cube centered at $0$ invariant. The set of
rotations satisfying this property forms the so-called hyperoctahedral group.
The translated grid  $\Xi_c-ctr(c)$ is centered at the origin, so
$\Xi_c - ctr(c) = R(\Xi_c-ctr(c))$ i.e. $\Xi_c =  ctr(c) + R(\Xi_c-ctr(c))$.
This transformation, represented in Figure \ref{fig_quotientsetfullset}, 
induces a linear map $\mathbb{R}_c:\mathbb{C}[\Xi_c]\to \mathbb{C}[\Xi_c]$
defined by
\begin{equation}\label{PermutationRepresentation}
  \begin{aligned}
    & \mathbb{R}_c(u)(\bo{p}) := u(ctr(c) + R(\bo{p}-ctr(c)))\\
    & \forall u\in \mathbb{C}[\Xi_c],\;\forall \bo{p}\in \Xi_c.
  \end{aligned}
\end{equation}
This matrix $\mathbb{R}_c$ is a permutation of interpolation nodes, and the correspondence
$R\leftrightarrow \mathbb{R}_c$ implements a permutation representation of the hyperoctahedral
group. This is the group representation that we propose to exploit 
to minimize the number of precomputed M2L matrices. 

\begin{figure}[t]
  \centering
  \includegraphics[width=0.3\linewidth]{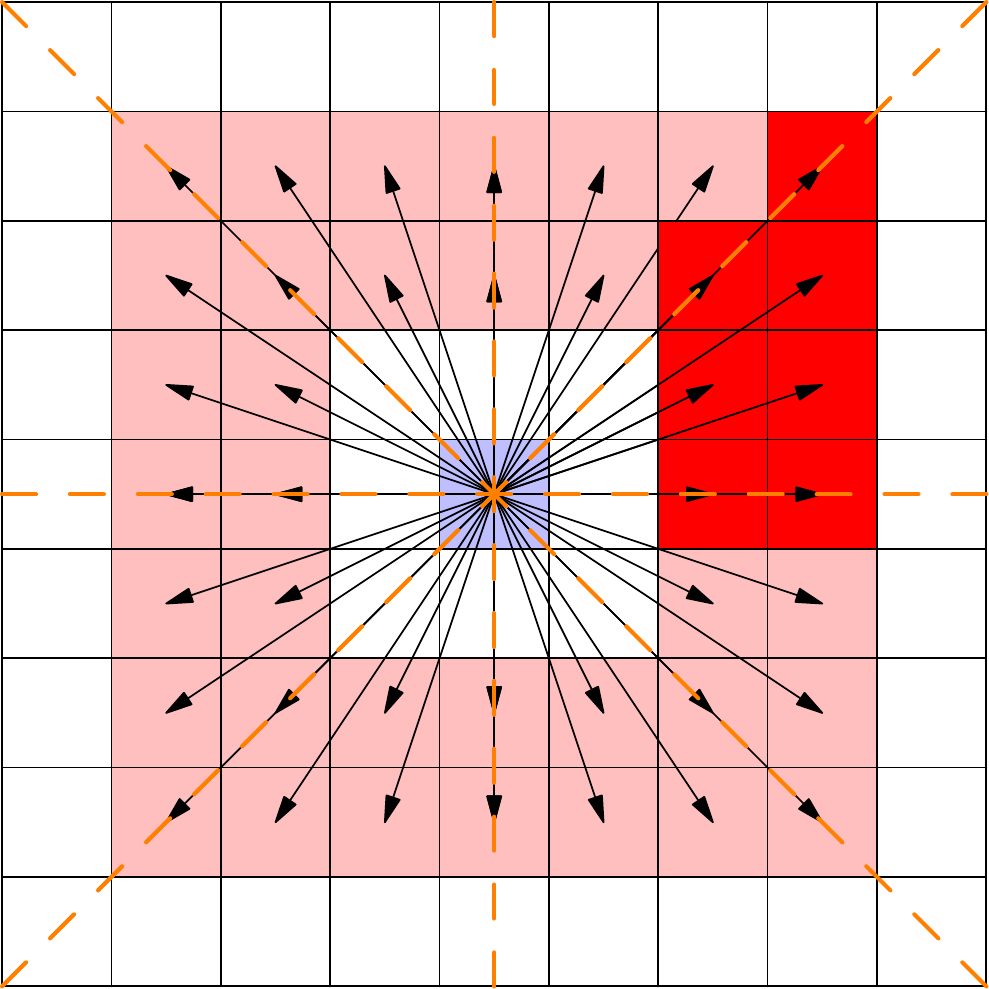}
  \hspace{0.2cm}
  \includegraphics[width=0.3\linewidth]{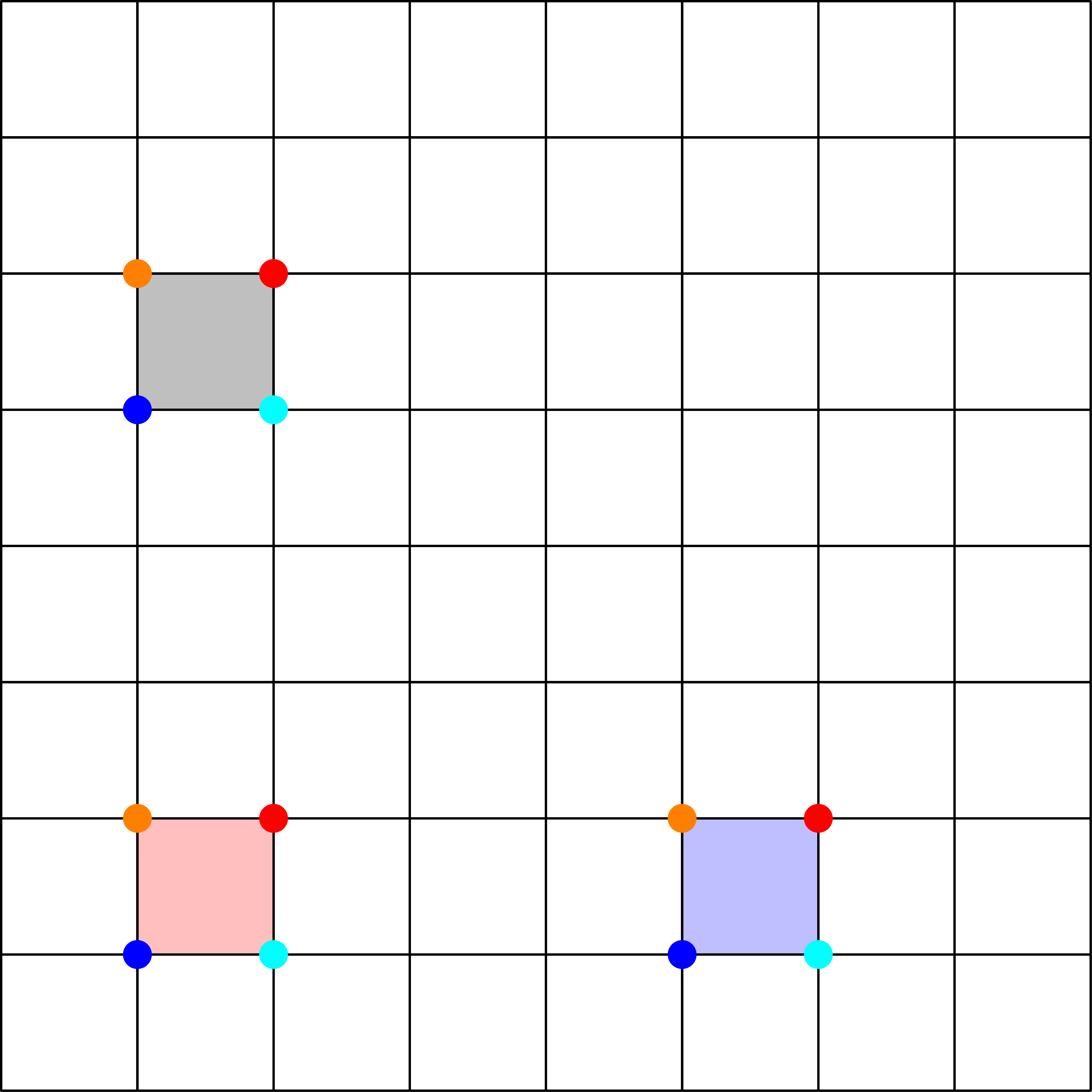}
  \hspace{0.2cm}
  \includegraphics[width=0.3\linewidth]{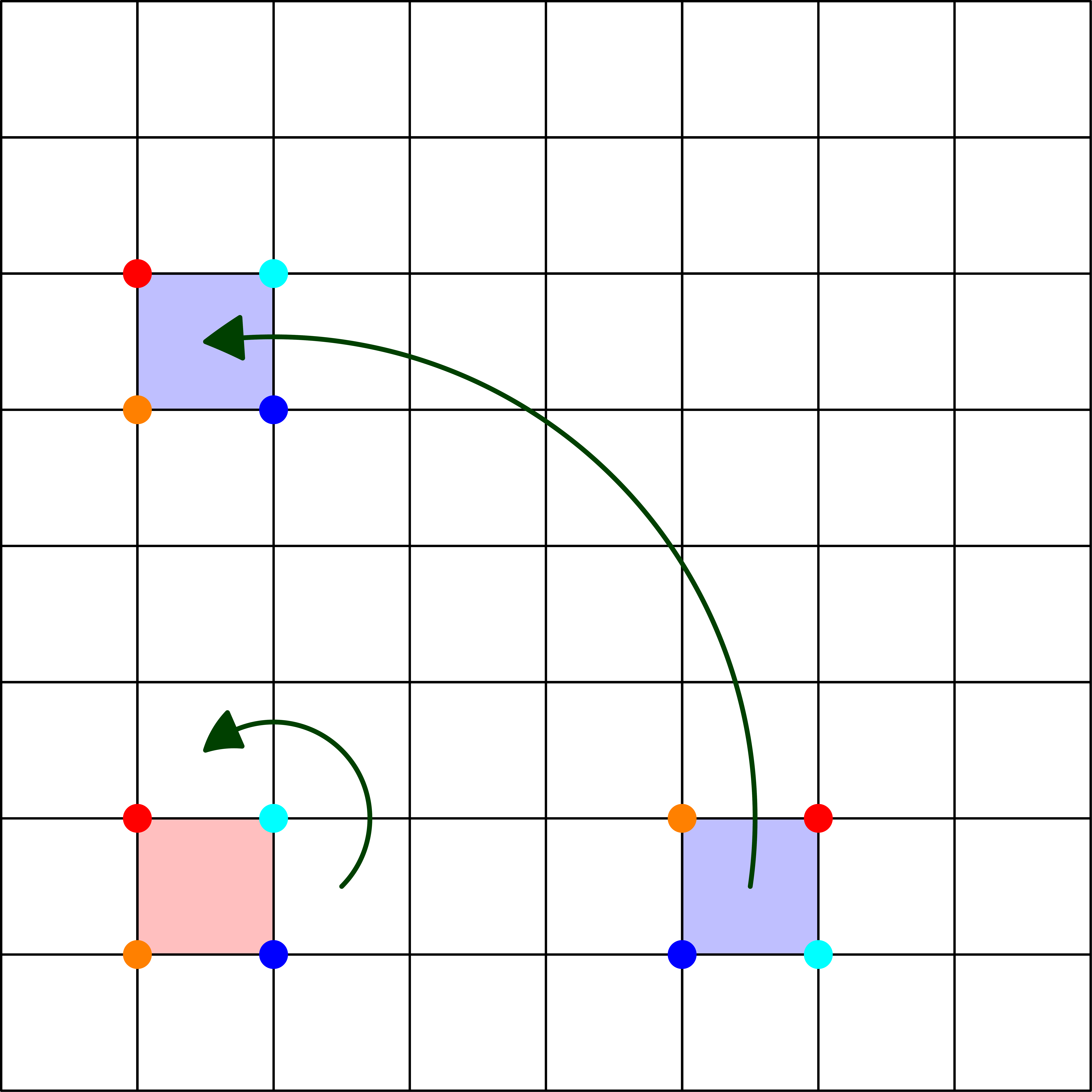}
  \caption{\label{fig_quotientsetfullset}\underline{Left:}
    Source cell $s$ in the low-frequency regime (blue) with possible target
    cells $t$ (red) such that $s$ is in the interaction list of $t$. The
    corresponding M2L matrices for cells $t$ in pale red can be all deduced (for
    instance) from permutations of the dark red ones. Symmetry axes of the square
    ($2$-cube) are represented in orange. \underline{Middle and right:} M2L matrix
    between the red and blue well-separated cells (middle) as a
    rotation of the M2L matrix between the red and grey cells (right), 
    that permutes the relative positions of the interpolation nodes (colored dots).
    Since the entries of the M2L matrices 
    correspond to interpolation nodes, permutations matrices represent this
    process. The underlying symmetry of $\kk{D}_2$ is the reflection with regard to
    the line $x=y$.}
\end{figure}

\quad\\ 
Denote $\mathbb{R}_\star:\mathbb{C}[\Xi_\star]\to \mathbb{C}[\Xi_\star]$
the permutation induced on $\Xi_\star$ by means of the correspondence described above.
When the same rotation transformation is applied to both source and target
cells $s$ and $t$, the transformed M2L matrix writes
\begin{equation}
  \begin{aligned}
    \mathbb{R}_t^\top G(\Xi_t,\Xi_s) \mathbb{R}_s
    & = \mathbb{R}_\star^\top \mathcal{G}_{s,t}(\Xi_\star,\Xi_\star) \mathbb{R}_\star\\
    & = \mathbb{R}_\star^\top\chi^\top \mathbb{F}^* \mathbb{D}_{s,t}\mathbb{F}\chi\mathbb{R}_\star.
  \end{aligned}
\end{equation}
We are going to show that the product above actually writes
$\chi^\top \mathbb{F}^* \tilde{\mathbb{D}}_{s,t}\mathbb{F}\chi $ where $\tilde{\mathbb{D}}_{s,t}$
is a diagonal matrix mapping $\mathbb{C}[\widehat{\Xi}_\star]\to \mathbb{C}[\widehat{\Xi}_\star]$
with coefficients obtained by permuting the diagonal of $\mathbb{D}_{s,t}$. 

In concrete computational terms, this means that, whenever two M2L interactions only differ
by a rotation, 
one M2L matrix is deduced from the other by a simple permutation of the
diagonal entries of the factor term  $\mathbb{D}_{s,t}$, which reduces storage and precomputation
cost. 

\quad\\
Let $c_\star = (1/2,\dots,1/2) = ctr(\Xi_\star)\in \mathbb{R}^d$ refer to the center of the normalized
interpolation grid. Denote $\Xi_0 = \Xi_\star-ctr(\Xi_\star)$ the normalized interpolation grid
translated so as to be centered at $0$. Take two vectors $u,v\in \mathbb{C}[\Xi_\star]$. Then
we have
\begin{equation}
  \begin{aligned}
    & v^\top\mathbb{R}_\star^\top \mathcal{G}_{s,t}(\Xi_\star,\Xi_\star) \mathbb{R}_\star u\\
    & = \sum_{\bo{p}\in \Xi_\star} \sum_{\bo{q}\in \Xi_\star}\mathcal{G}_{s,t}(\bo{p}-\bo{q})
    u(c_\star + R(\bo{p}-c_\star))v(c_\star + R(\bo{q}-c_\star))\\
    & = \sum_{\bo{x}\in \Xi_0} \sum_{\bo{y}\in \Xi_0}\mathcal{G}_{s,t}(\bo{x}-\bo{y})
    u(c_\star + R(\bo{x}))v(c_\star + R(\bo{y}))\\
    & = \sum_{\bo{x}'\in \Xi_\star} \sum_{\bo{y}'\in \Xi_\star}\mathcal{G}_{s,t}(R^*(\bo{p}-\bo{q}))
    u(\bo{p})v(\bo{q}).\\    
  \end{aligned}
\end{equation}
Since $u,v$ are arbitrarily chosen in $\mathbb{C}[\Xi_\star]$, this can be rewritten in
condensed form by $\mathbb{R}_\star^\top \mathcal{G}_{s,t}(\Xi_\star,\Xi_\star) \mathbb{R}_\star =
(\mathcal{G}_{s,t}\circ R^*)(\Xi_\star,\Xi_\star)$. Now there only remains to return to the
calculus presented in section \ref{subsection_equispacedgrids} that shows
\begin{equation}
\label{eq:permuteM2L}
  \begin{aligned}
    & \mathbb{R}_\star^\top \mathcal{G}_{s,t}(\Xi_\star,\Xi_\star) \mathbb{R}_\star =
    \chi^\top\mathbb{F}^*\,\mathbb{D}[\mathcal{G}_{s,t}\circ R^*]\,\mathbb{F}\chi\\
    & \text{with}\quad \mathbb{D}[\mathcal{G}_{s,t}\circ R^*] =
    \mathrm{diag}(\,\mathbb{F}(\mathcal{G}_{s,t}\circ R^*)\,).
  \end{aligned}
\end{equation}
To summarize, the only difference between $\mathbb{R}_\star^\top \mathcal{G}_{s,t}(\Xi_\star,\Xi_\star) \mathbb{R}_\star$
and $\mathcal{G}_{s,t}(\Xi_\star,\Xi_\star)$ lies in the coefficients of the Fourier symbol in the central diagonal
term of their factorized form. Let us examine how the rotation $R^*$ acts on the Fourier symbol of
$\mathcal{G}_{s,t}$. According to \eqref{DFTformula}, and since $R(\Xi_\sharp) = \Xi_\sharp$,
we have
\begin{equation}
  \begin{aligned}
    \mathbb{F}(\mathcal{G}_{s,t}\circ R^*)(\boldsymbol{\xi})
    & = \frac{1}{T^{d/2}}\sum_{\bo{x}\in \Xi_\sharp}
    \mathcal{G}_{s,t}(R^*\bo{x})
    \exp(-2\imath \pi\, \boldsymbol{\xi}\cdot \bo{x})\\
    & = \frac{1}{T^{d/2}}\sum_{\bo{y}\in \Xi_\sharp}
    \mathcal{G}_{s,t}(\bo{y})
    \exp(-2\imath \pi\, R^*(\boldsymbol{\xi})\cdot \bo{y}) =
        \mathbb{F}(\mathcal{G}_{s,t})( R^*\boldsymbol{\xi})
  \end{aligned}
\end{equation}
which again summarizes as $\mathbb{F}(\mathcal{G}_{s,t}\circ R^*) = \mathbb{F}(\mathcal{G}_{s,t})\circ R^*$.
In other words, the permutation on the grid $\widehat{\Xi}_\sharp$ that should be applied on the symbol of
the periodized M2L operator is the permutation associated to the inverse rotation $R^* = R^{-1}$.

\subsubsection{Symmetries in practice}

Usually, 
the multipole and local expansions are permuted respectively before and after the evaluation of a M2L operator 
(see e.g. \cite{messnerINRIA,malhotrabiros16}). Here we can reduce this number of applied permutations to only one thanks to the diagonal form of the M2L matrices in the Fourier domain in \textit{defmm} (see equation \eqref{eq:permuteM2L}). Indeed, the permutations can all be applied  
to these diagonal M2L matrices, resulting in a permutation of their diagonal entries.
          This reduces the number of permutations from two to only one.
          
          In practice, the permutations induce memory indirections when performing the Hadamard products that may prevent the compiler auto-vectorization. We hence rely on an OpenMP\footnote{See: \url{https://www.openmp.org/}} directive to enforce the compiler vectorization 
          (see \cite{theseigor} for details). 
          
\section{Optimizations}
\label{section_optimizations}
We here present algorithmic and programming optimizations regarding high performance computing on one CPU core.

\subsection{Processing the FFTs}
\label{subsection_processingtheFFTs}

   \begin{figure}[t]
        \centering
        \includegraphics[width=0.7\linewidth]{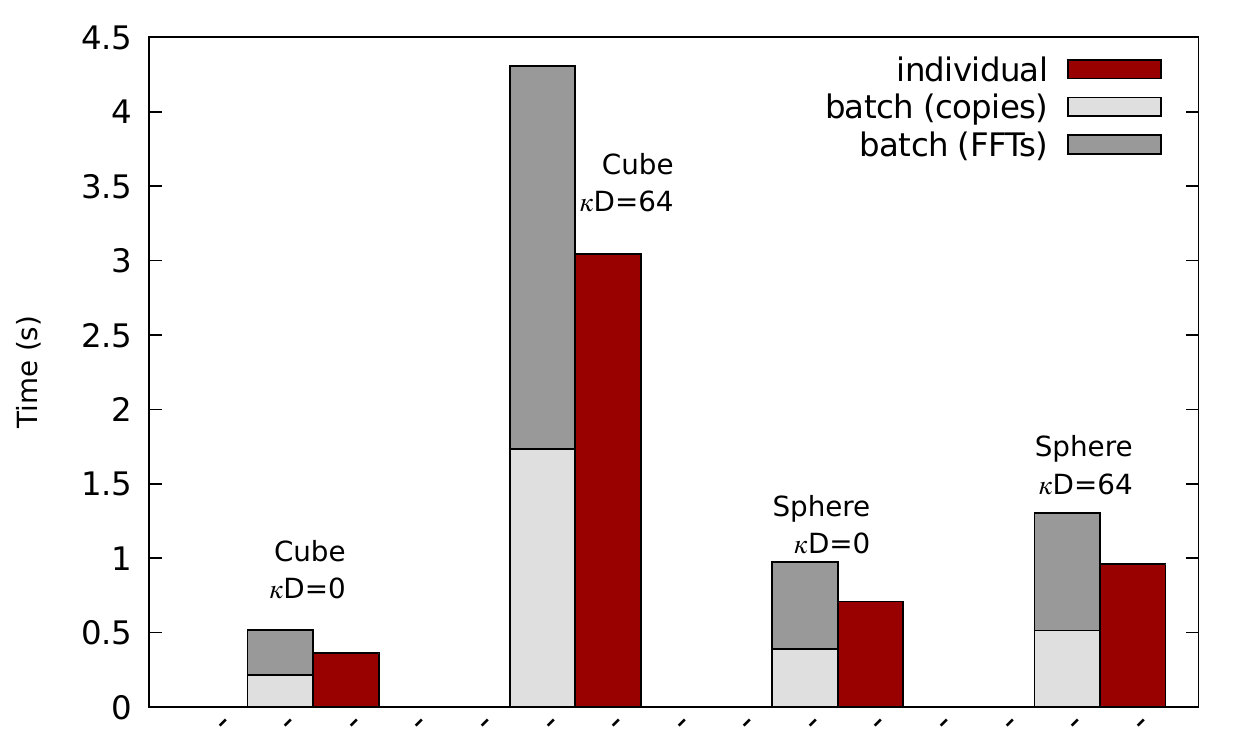}
        \caption{\label{fig_dufmm_comparison_intensivebatch}Timings of all 
        FFT applications with 1D interpolation order $4$, using the batch and individual methods. 
        Tests performed 
        with $10^7$ particles (see section 
        \ref{section_numerical}
        for details).}
    \end{figure}

We rely on the state-of-the-art FFTW library 
\cite{FFTWref} to efficiently perform all at once our numerous FFTs.   
Our 3D FFTs (considering $d=3$) are however small: $(2 L -1) ^d $ elements, with usually $L \leq 7$.
This makes the FFTW "plan" creation, required to decide the most efficient FFT implementation, 
more time consuming than its execution (i.e. the actual FFT computation). 
Moreover, 
zero-padding is required for the circulant embedding (see section \ref{ss:FFT_symmetries}) of the expansions stored in the Fourier domain: this implies expansion copies before each FFT, as well as after each reverse FFT.  

One could first use the "batch" FFTW feature\footnote{See: \url{http://www.fftw.org/fftw3\_doc/Advanced-Complex-DFTs.html}}
to efficiently process our numerous small FFTs. This performs all FFTs with one single FFTW plan and can improve performance 
compared to multiple individual FFT calls. All expansion copies hence have to be performed all together before and after the batch FFTW call. 
We refer to this method as the \textit{batch} one. 

Since all our expansions have the same size and the same memory alignment, we can also rely on one single FFTW plan for all our FFTs, and perform 
individually each FFT along with its expansion copy. 
This \textit{individual} method benefits from the cache memory for our small-sized expansions, and hence avoids to load data twice from main memory (for FFTs and for copies, which are both memory-bound operations) as done in the batch method. This is shown in figure \ref{fig_dufmm_comparison_intensivebatch}, where
the individual method always outperforms (up to 30\%) the batch one. 
We thus use the individual method in \textit{defmm}.

\subsection{BLAS-based upward and downward pass operators}
\label{subsection_blasbasedoperatorsoptimization}
Because of the directional aspects of \textit{defmm}, 
the cost of the upward and downward passes is linearithmic for surface meshes (see \cite{engquistying}). As opposed to FMMs  
for non-oscillatory kernels, these steps have a significant cost in practice,
which justifies their careful optimization. We hence first adapt an optimization suggested in \cite{agullobramascoulauddarvemessnertoru12}
to equispaced grids and to oscillatory kernels (see section \ref{dufmm_M2M_tensorized_algo}), and then improve it 
for directional FMMs  
(see sections \ref{section_dufmm_direction_stacking} and \ref{section_dufmm_realcomplex_stacking}). 
We validate our optimizations in section \ref{ss:M2M_L2L_perf}. 

      \subsubsection{Tensorized M2M and L2L operators}
      \label{dufmm_M2M_tensorized_algo}
      We here detail  
      a fast evaluation scheme  suggested in \cite{agullobramascoulauddarvemessnertoru12} for the M2M and L2L operators on interpolation grids using tensorized Chebyshev rules in the low-frequency regime. 
      We extend this scheme to  
      equispaced grids which are 
      also tensorized grids. The bijection in definition \ref{definition_dufmm_I} induces a node indexing 
      in the interpolation grid $\bb{G}$ allowing to exploit this tensorized structure.
        \begin{definition}
            \label{definition_dufmm_I}
            Let $L\in \bb{N}$. $\kk{I}$ denotes the bijection from $[\![0,L^d-1]\!]$ to $[\![0,L-1]\!]^d$ such that $\kk{I}^{-1}(\bo{I}) := \sum_{k=1}^dI_kL^{k-1}$, $\forall\hspace{0.1cm}\bo{I} := \left( \kk{I}_1,..., \kk{I}_d\right)$.
         \end{definition}
      
      Thanks to this tensorized structure, the matrix representations of the M2M and L2L operators are tensorized matrices. 
      The L2L case being obtained by transposition, we focus on the M2M case. 
      Let $\bb{M}\in\bb{R}^{L^d\times L^d}$ be the matrix representation of a M2M operation, 
      there exists therefore $M^{(p)}\in \bb{R}^{L\times L}$, $p\in [\![1,d]\!]$, such that $\bb{M} = \displaystyle\mathop{\otimes}_{p=1}^dM^{(p)}$. Using definition \ref{definition_dufmm_I}, the following holds
      \begin{equation*}
          \left(\mathop{\otimes}_{k=1}^dM^{(k)}\right)_{i,j} = \prod_{k=1}^dM^{(k)}_{\kk{I}(i)_k,\kk{I}(j)_k}.
      \end{equation*}

      For any $\bo{v}\in \bb{C}^{L^d}$, we have
      \begin{equation*}
        \begin{aligned}
          \left(\bb{M}\bo{v}\right)_{i} &= \left(\left(\mathop{\otimes}_{k=1}^dM^{(k)}\right)\bo{v}\right)_{i} = \sum_{j=0}^{L^d-1}\inpar{\mathop{\prod}_{p=1}^dM^{(p)}_{\kk{I}(i)_p,\kk{I}(j)_p}}v_{j}\\
          &= \sum_{\bo{J}\in [\![0,L-1]\!]^d}\mathop{\prod}_{p=1}^dM^{(p)}_{\kk{I}(i)_p,\bo{J}_p}v_{\kk{I}^{-1}(\bo{J})} = \sum_{\bo{J}\in [\![0,L-1]\!]^d}\mathop{\prod}_{p=1}^{d-1}M^{(p)}_{\kk{I}(i)_p,\bo{J}_p}\left(M^{(d)}_{\kk{I}(i)_d,\bo{J}_d}v_{\kk{I}^{-1}(\bo{J})}\right)\\
          &= \sum_{\substack{\bo{J}\in [\![0,L-1]\!]^d\\\bo{J}_d=0}}\mathop{\prod}_{p=1}^{d-1}M^{(p)}_{\kk{I}(i)_p,\bo{J}_p}\left(\sum_{q=0}^{L-1}M^{(d)}_{\kk{I}(i)_d,q}v_{\kk{I}^{-1}(\bo{J}+q\bo{e}_d)}\right)
        \end{aligned}
      \end{equation*}
      where the last sum over $q$ 
      matches  
      a matrix-vector product of size $L\times L$. For  
      $i$ varying, this matrix-vector product is performed on $L^{d-1}$ different restrictions of $\bo{v}$, 
      involving each time the same matrix. Hence, there exists a permutation $P\in\bb{R}^{L^d\times L^d}$ such that
      \begin{equation*}
        \begin{aligned}
          \left(\bb{M}\bo{v}\right)_{i} &= \sum_{\substack{\bo{J}\in [\![0,L-1]\!]^d\\\bo{J}_d=0}}\mathop{\prod}_{p=1}^{d-1}M^{(p)}_{\kk{I}(i)_p,\bo{J}_p}\left( diag(M^{(d)})P\bo{v}\right)_{\kk{I}(i)_d}
        \end{aligned}
      \end{equation*}
      where $diag(M^{(d)})$ is a block-diagonal matrix with all diagonal bocks equal to $M^{(d)}$. This process can be repeated $d$ times, leading to an overall complexity of $\cc{O}\left(dL^{d+1}\right)$ since the permutations are applied in $\cc{O}(L^d)$ operations. This compares
      favorably with 
      the $\cc{O}\left(L^{2d}\right)$ complexity of a naive approach.  
      Since the same matrix is used for multiple vectors at 
      each of the $d$ iterations, matrix-vector products can be stacked into matrix-matrix products 
      to benefit from the level-3 BLAS higher efficiency 
      \cite{agullobramascoulauddarvemessnertoru12}. 
      We will refer to this version as the \textit{tensorized} (or $t$) method.  
      
      The extension of the tensorized method to  
      oscillatory kernels  
      is obtained by noting that the directional M2M matrix $\bb{M}(u)$ with direction $u$ can be written
      \begin{equation}
        \label{eq_dufmm_dlefturight}
        \bb{M}\left( u\right) = D_{0}(u)\left( \mathop{\otimes}_{p=1}^dM^{(p)}\right)D_{1}(u)
      \end{equation}
      with two diagonal matrices $D_{0}(u)$ and $D_{1}(u)$ composed of complex exponential evaluations (see section \ref{subsection_interpolationbasedFMM}).

      \subsubsection{Directional stacking}
      \label{section_dufmm_direction_stacking}
      The tensorized me\-thod can be further optimized in the high-frequency regime. 
      Starting from equation \ref{eq_dufmm_dlefturight}, if we consider for a given cell two directions $u$ and $u'$ with corresponding directional multipole expansions $\bo{v}(u)$ and $\bo{v}(u')$, denoting by $\odot$ the Hadamard product, the "stacking"
      \begin{equation*}
        \begin{aligned}
          \begin{bmatrix}
            \bb{M}(u)\bo{v}(u) &
            \bb{M}(u')\bo{v}(u')
          \end{bmatrix}
          \end{aligned}
          \end{equation*}
          can be expressed as
          \begin{equation*}
              \begin{aligned}
         \begin{bmatrix}D_{0}(u) & D_{0}(u')\end{bmatrix}
          \odot \inpar{\left(\displaystyle\mathop{\otimes}_{p=1}^dM^{(p)}\right) \cdot \inpar{
          \begin{bmatrix}D_{1}(u) & D_{1}(u')\end{bmatrix}\odot \begin{bmatrix}\bo{v}(u) & \bo{v}(u')\end{bmatrix}}}.
        \end{aligned}
      \end{equation*}
      $\mathop{\otimes}_{p=1}^dM^{(p)}$ here applies to a matrix instead of a vector, which allows to further benefit from the level-3 BLAS  efficiency. 
      Contrary to the tensorized method where $L^{d-1}$ vectors can be stacked, 
      the number of vectors that can be stacked is now $L^{d-1}$ multiplied by the number of effective 
      directional multipole expansions in each cell. 
      This method is referred to as the \textit{tensorized+stacking} method (denoted $t+s$). 

      \subsubsection{Benefiting from real matrix  
      products}
      \label{section_dufmm_realcomplex_stacking}
      One may observe that the matrices $M^{(p)}$ are real since they are composed of evaluations of Lagrange polynomials, but are applied to complex vectors (at least in a directional method). For any $\bo{v}\in \bb{C}^{L^d}$, we thus have
      \begin{equation*}
	\begin{aligned}
	  \left( \mathop{\otimes}_{p=1}^dM^{(p)}\right)\bo{v}  &= \left( \mathop{\otimes}_{p=1}^dM^{(p)}\right)\kk{Re}\{\bo{v}\} + i \left( \mathop{\otimes}_{p=1}^dM^{(p)}\right)\kk{Im}\{\bo{v}\}.
	\end{aligned}
      \end{equation*}
      By deinterleaving the real and imaginary parts in the expansion vectors, one can obtain 
      stacked matrices of real elements.
      Hence, by discarding the imaginary part of the (real) M2M 
      matrices, we can halve the number of required 
      arithmetic operations. 
      This method is referred to as the \textit{tensorized+stacking+real} method (denoted $t+s+r$).
      Since there is only one multipole and one local expansion per cell in the low-frequency regime, no stacking of the directional expansions is performed and the $t+s+r$ method is reduced to $t+r$ in this frequency regime. 
      
      \subsubsection{Performance results}
      \label{ss:M2M_L2L_perf}
      
    \begin{figure}[t]
        \centering
        \begin{subfigure}{.46\textwidth}
            \includegraphics[trim={1cm 0 1cm 1cm},width=\linewidth]{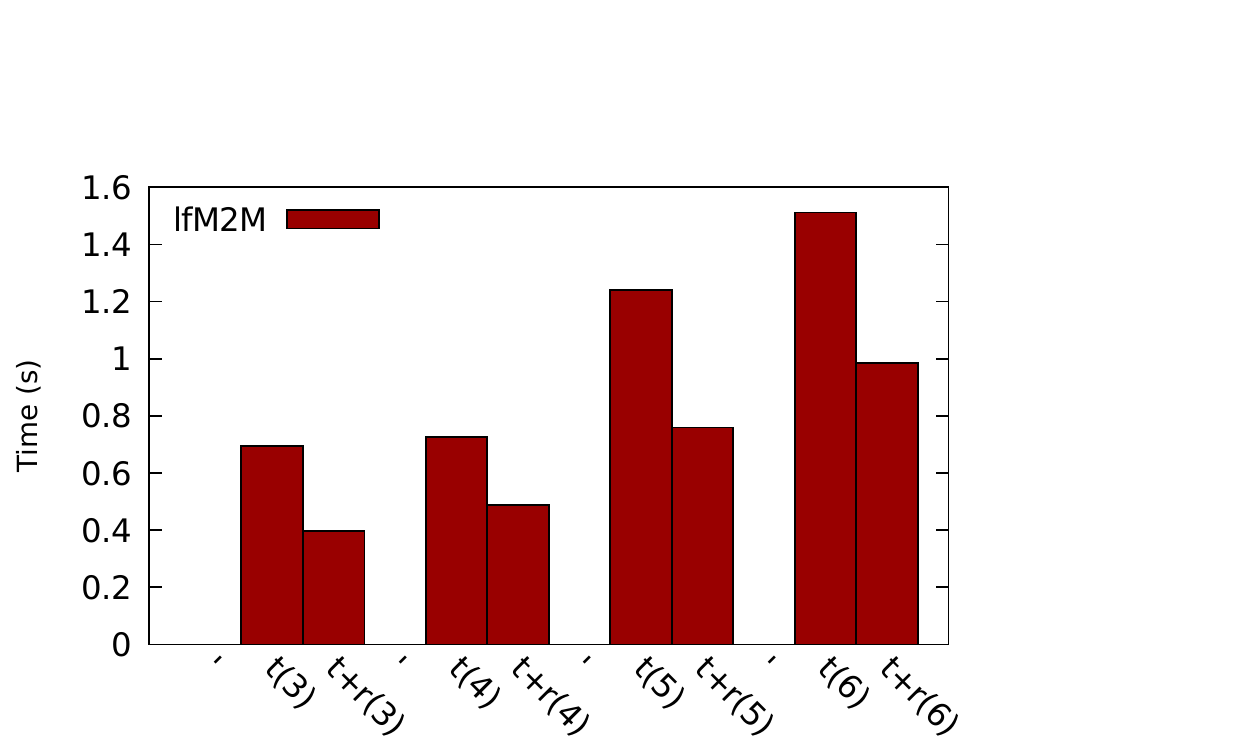}  
            \caption{Sphere $\kappa D = 0$}
            \label{fig:M2Msph0}
        \end{subfigure}
        \begin{subfigure}{.46\textwidth}
            \centering
            \includegraphics[trim={1cm 0cm 1cm 1cm},width=\linewidth]{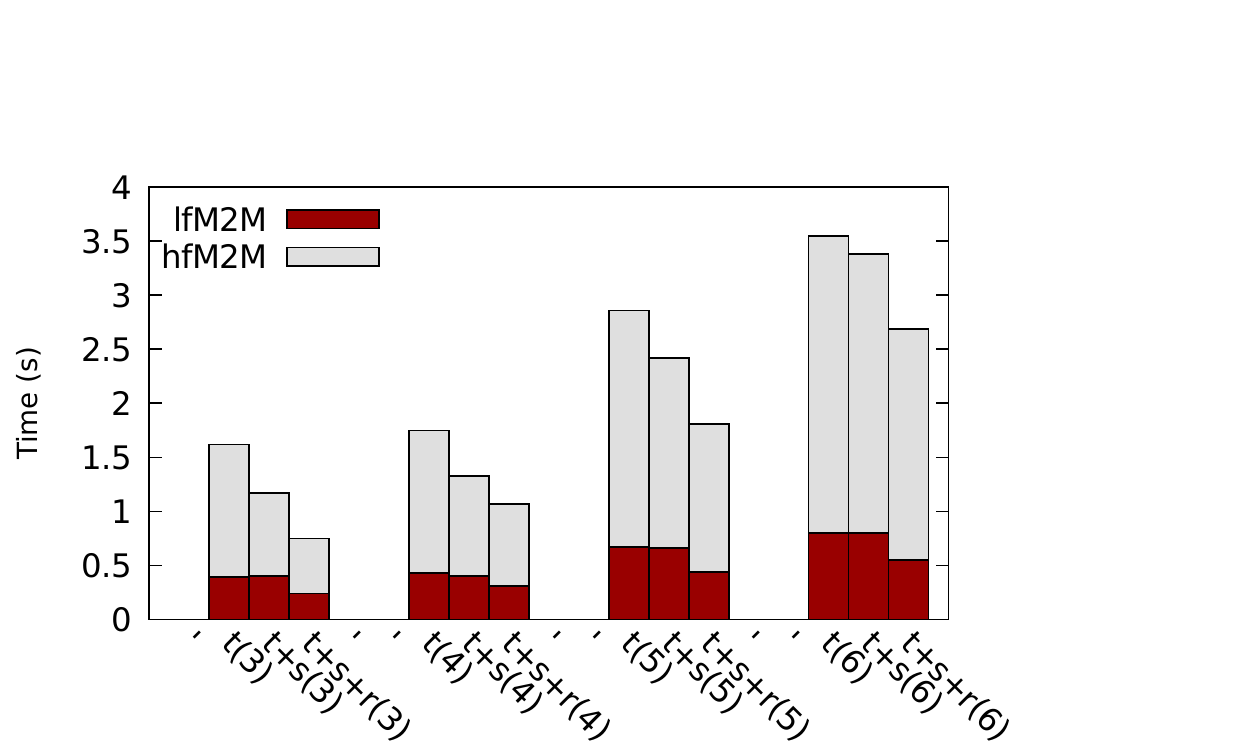}  
            \caption{Sphere $\kappa D = 64$}
        \label{fig:M2Msph64}
        \end{subfigure}
        
        \begin{subfigure}{.46\textwidth}
            \centering
            \includegraphics[trim={1cm 0 1cm 1cm},width=\linewidth]{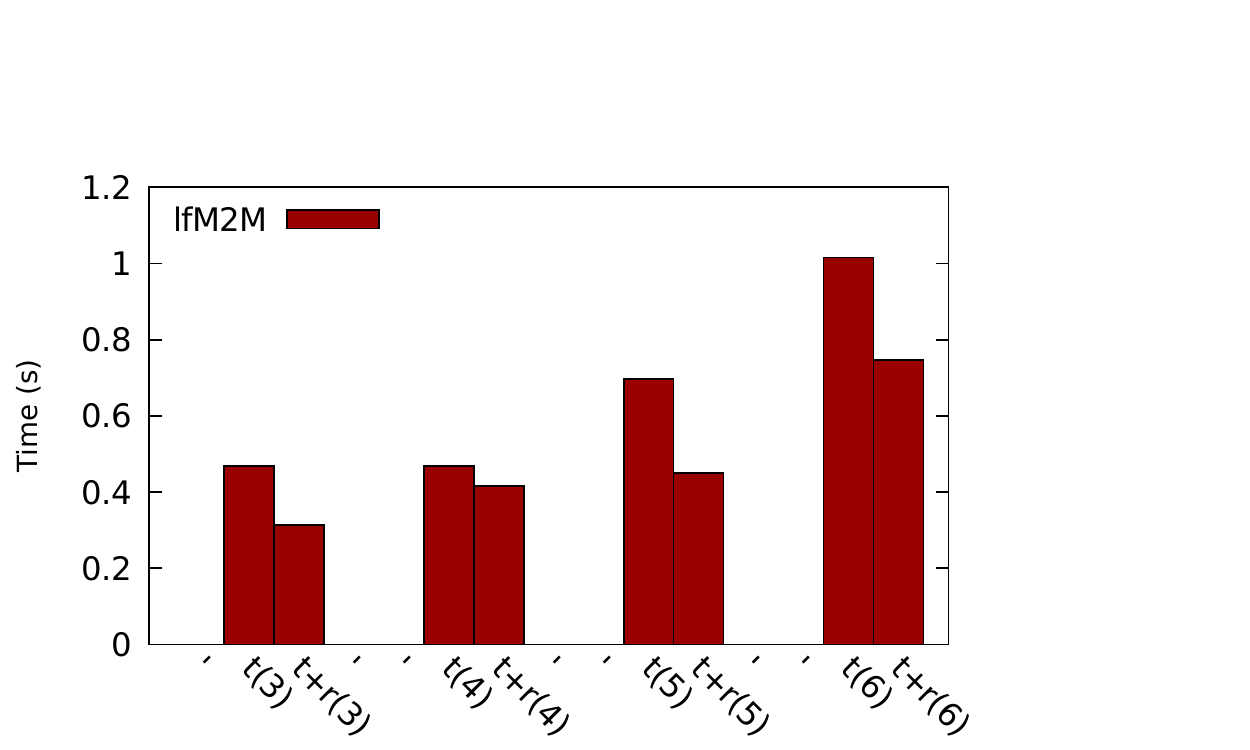}  
            \caption{Cube $\kappa D = 0$}
            \label{fig:M2Mcb0}
        \end{subfigure}
        \begin{subfigure}{.46\textwidth}
            \centering
            \includegraphics[trim={1cm 0 1cm 1cm},width=\linewidth]{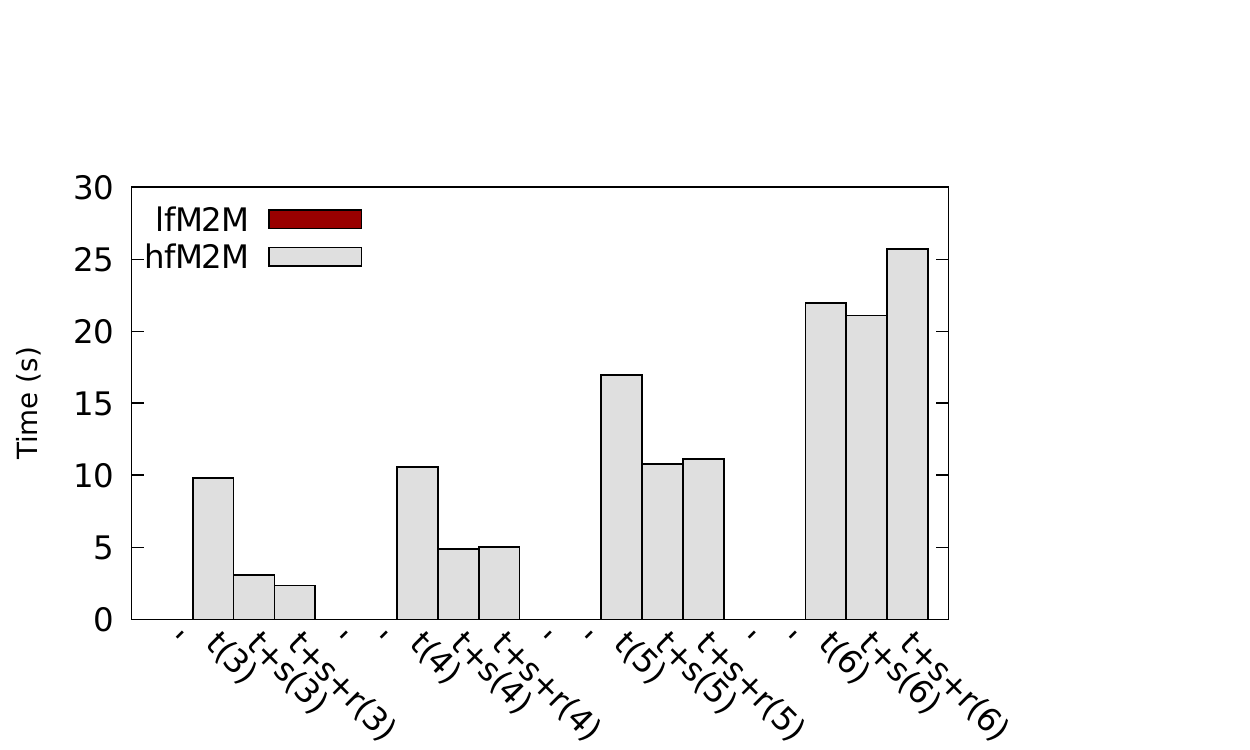}  
            \caption{Cube $\kappa D = 64$}
            \label{fig:M2Mcb64}
        \end{subfigure}
        \caption{\label{fig_dufmm_res_M2M_variants0} Timings of all 
        M2M evaluations for the tensorized ($t$), tensorized+stacking ($t+s$) and tensorized+stacking+real ($t+s+r$) methods, 
        for different distributions with $10^7$ particles and for different frequency regimes. $\kappa D$ refers to the length of the particle distribution multiplied by the wavenumber $\kappa$.
        1D interpolation orders are indicated inside parentheses. lfM2M (respectively hfM2M) refers to the low- (resp. high-) frequency M2M operations.  
        See section \ref{subsubsection_testcasesandcompilers} for a detailed test case description.}
      \end{figure}
      
      In figure \ref{fig_dufmm_res_M2M_variants0} are detailed the  
      M2M timings for these three optimizations.  
      Regarding the sphere with $\kappa D = 64$ (see figure \ref{fig:M2Msph64}),   
      the new $t+s$ method fastens the M2M evaluations of the $t$ method in the high-frequency regime, while our $t+s+r$ method 
      further reduces the M2M evaluation times of this $t+s$ method in the two frequency regimes. 
      In the end, our $t+s+r$ method outperforms the original $t$ method of \cite{agullobramascoulauddarvemessnertoru12} by a factor up to $2.16 \times$.

      This is also valid for the cube with $\kappa D = 64$ (see figure \ref{fig:M2Mcb64}), 
      when considering low interpolation orders: 
      for $L=3$ the $t+s+r$ method outperforms the $t$ one by a factor of $\approx 4 \times$. 
      For larger interpolation orders, the $t+s+r$ method is however less efficient than the $t+s$ one, and even leads to a performance loss for $L=6$. 
      This is due to our small matrix sizes (one matrix being of size $L \times L$) for which the $t+s$ method already provides a high enough number of stacked 
      vectors with the cube distribution: there is thus no benefit with a higher number of stacked vectors (as provided by the $t+s+r$ method). Moreover, the deinterleaving cost is not here offset by the lower number of operations induced by the real matrix products. 
      
      In the low-frequency regime (see figures 
      \ref{fig:M2Msph0},\ref{fig:M2Msph64},\ref{fig:M2Mcb0}), 
      the deinterleaving still allows faster M2M evaluations than the original $t$ method 
      with performance gains up to 4x. 
      Since we target surface particle distributions for BIE problems, we choose to rely on the $t+s+r$ method in \textit{defmm}
      for M2M evaluations, as well as for L2L ones (for which the same performance gains have been obtained).

\subsection{Vectorization for direct computation}
\label{subsection_vectorizationfordirectcomputation}

The direct computation involved in the P2P operator consists in two nested loops over the (possibly distinct) target and source particles. 
As usually performed in FMM codes, we aim at vectorizing this computation to benefit from the AVX2 or AVX-512 units.
We target here the vectorization of the outer loop (on target particles), which leads to fewer reduction operations than with an inner loop vectorization. 
Based on the data locality among particles described in section \ref{subsubsection_treeconstructionanddatalocality}, our P2P code is close to 
the one presented in  
\cite{yokota18}.
As in \cite{yokota18}, we load separately the real and imaginary parts of the complex numbers and we perform only real (not complex) mathematical 
operations: this is required for our compiler (Intel C++ compiler) to fully vectorize the code. 
There are nevertheless some differences between our code and the one presented in \cite{yokota18}:
\begin{itemize}
    \item 
    Since the inputs of our code are the particle point clouds, we have to be able to numerically handle the practical case of interactions between equally located particles on which the kernel function cannot be evaluated.
    These singularities (i.e. the interactions between a particle and itself) are resolved directly: if the distance between the particles is too small, we set one operand to $0.0$ and we continue the interaction computation, which hence results in $0.0$.
    This minimal test may be processed with masks in the vectorized code, hence leading to a minimal performance penalty. 
    \item In order to vectorize our outer loop, an OpenMP compiler directive is required 
    for the Intel C++ compiler (which targets the inner loop otherwise).
    It can be noticed that the GNU C++ compiler currently fails to vectorize our code (with or without the OpenMP directive). 
    \item Since the particle positions are not updated from one FMM application to the next in BIE problems, 
    we can rely on  
    a structure-of-array data layout for the potentials and the charges, which favors efficient vector memory accesses. 
\end{itemize}

In the end, using 64-bit floating-point numbers on 512-bit AVX-512 units (hence a maximum speedup of $8 \times$), 
our vectorized P2P operator offers performance gains up to $7.6 \times$ over the original scalar implementation (see \cite{theseigor}
for details).

\section{Numerical results}
\label{section_numerical}
We now provide numerical results illustrating the convergence and the performance of \textit{defmm}. All codes run sequentially on a Intel Xeon Gold 6152 CPU with AVX-512 units and 384 GB of RAM.
\subsection{Relative error}
We first check the 
overall accuracy of \textit{defmm} on a (volumic) uniform cube test case composed of $125000$ particles. 
We compute the error of a \textit{defmm} approximation $\tilde{p}$ of $p$ (see equation \eqref{fmmsum}) for a norm $||\cdot ||$ as $\frac{||p-\tilde{p}||}{||p||}$. The charges are randomly chosen.

As shown in figure \ref{fig_convcube}, the convergence  follows the estimate derived from the interpolation process on equispaced grids
in section \ref{subsection_dufmm_proofofthemaintheorem}, i.e. a geometric convergence in the 1D interpolation order. 
However, the same interpolation order leads to lower accuracies in the high-frequency regime than in the low-frequency one.
We believe that this is a consequence of the small initial set of directions 
chosen 
for performance reasons in the \textit{defmm} implementation (see section \ref{subsubsection_directiongeneration}). Similar results were obtained on a surface sphere distribution. 

\begin{figure}[t]
    \centering
    \includegraphics[width=0.8\linewidth]{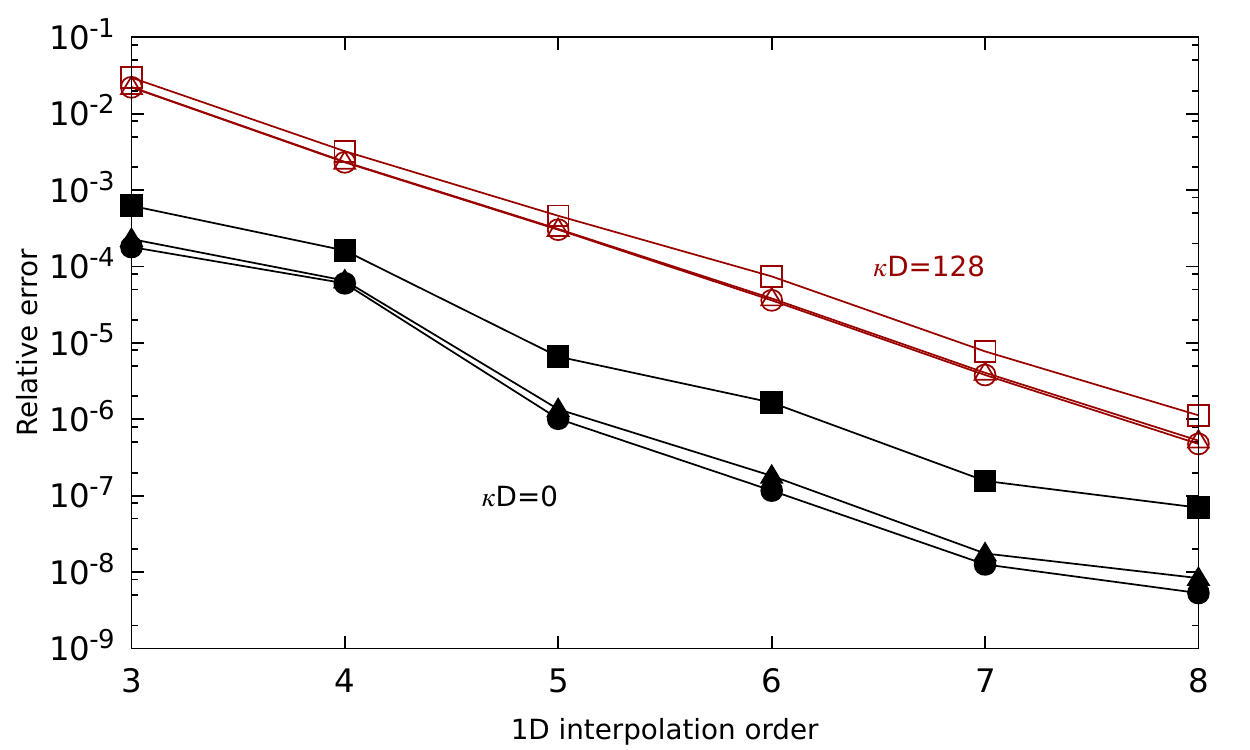}
    \caption{\label{fig_convcube}Relative error of the FMM  
    over a 3D uniform cube with $1.25\times 10^{5}$ particles.
    Maximum norm: squares; $l^1$ norm: circles; $l^2$ norm: triangles.
    }
\end{figure}

\subsection{Performance comparison with \textit{dfmm}}
\label{subsubsection_testcasesandcompilers}
For this performance comparison, we consider the following  
test cases, all with $10^7$ particles ($\pm 1\%$).
\begin{itemize}
    \item The \textit{uniform cube} test case (volumic distribution). This classical FMM test case is not typical for BIE problems, but is still interesting due to its high computation cost 
    for directional FMMs  
    (the number of effective expansions being maximum). 
    \item The classical \textit{sphere} test case where 
    the particles are quasi-uniformly 
    scattered on the surface of the unit sphere. This is also considered as a difficult case, with 
    high memory requirements and high computation costs, 
    for the directional methods on boundary particle distributions \cite{engquistying}. 
    \item The \textit{refined cube}, where the particles are highly non-uniformly sampled along the surface of the unit cube with a higher concentration around the edges and corners. This mimics distributions obtained with mesh refinement.
    \item The \textit{ellipse}, which is an elongated distribution favorable to 
    directional FMMs 
    due to the reduced number of directions in the high-frequency regime. Our ellipse distribution presents a higher concentration of particles on the two poles.
\end{itemize}

\textit{defmm} is compiled using the Intel C++ 
compiler.  
All the BLAS calls use the Intel Math Kernel Library (MKL) 
and we rely on the FFTW3 library for the FFTs.

Regarding \textit{dfmm} (only \texttt{g++} supported), we consider its two best variants \cite{messnerINRIA}, 
both based on low-rank compressions and on interpolations using tensorized Chebyshev grids. 
The first variant (\textit{IAblk} -- Individual Approximation with symmetries and BLocKing) relies on vector stacking and symmetries 
to improve performance with level-3 BLAS routines.
The second one (\textit{SArcmp} -- Single Approximation with ReCoMPression) uses global approximations of the M2L matrices and recompressions to reduce the numerical ranks of these matrices. 
According to \cite{messnerINRIA}, this can lead to faster M2L evaluations than with \textit{IAblk}, but at the cost of longer precomputation times.

Following the methodology used in \cite{messner12}, we fix the threshold in the low-rank approximations of \textit{dfmm}
to $10^{-L}$.
We checked that 
the accuracy of the Chebyshev interpolation in \textit{dfmm} is then similar  
to the accuracy of the equispaced interpolation in \textit{defmm}. 
For each test, the $MaxDepth$ (\textit{dfmm}) and the $Ncrit$ (\textit{defmm}) values (see section \ref{ss:tree_construction}) are tuned to minimize the FMM application time. 
Due to memory constraints, we were not able to run \textit{dfmm} (with \textit{IAblk} or \textit{SArcmp}) on the refined cube for $L=6$. 
The comparison 
results are given in figure \ref{fig_compdefmmdfmm}. 

\begin{figure}[t]
    \begin{subfigure}{.45\textwidth}
  \centering
  \includegraphics[trim={1cm 0 1cm 1cm},width=\linewidth]{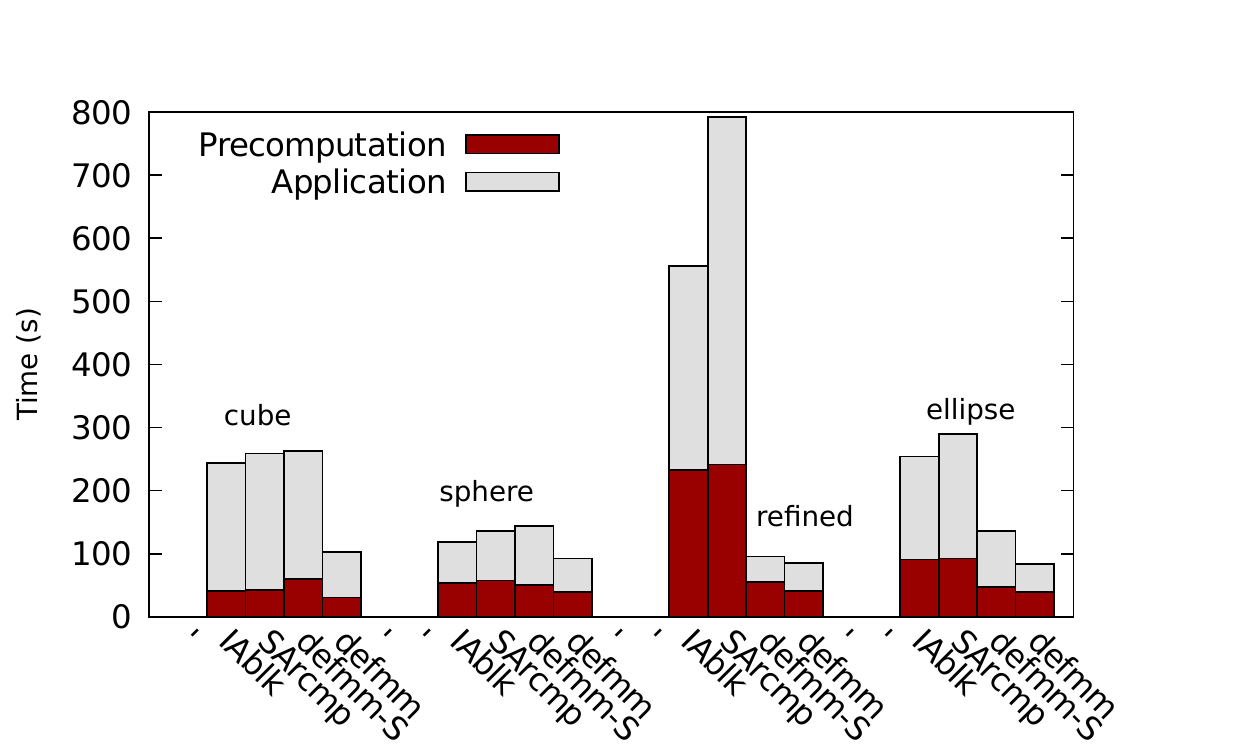}  
  \caption{$L=4$, $\kappa D = 0$}
  \label{fig:sub-first}
\end{subfigure}
\begin{subfigure}{.45\textwidth}
  \centering
  \includegraphics[trim={1cm 0 1cm 1cm},width=\linewidth]{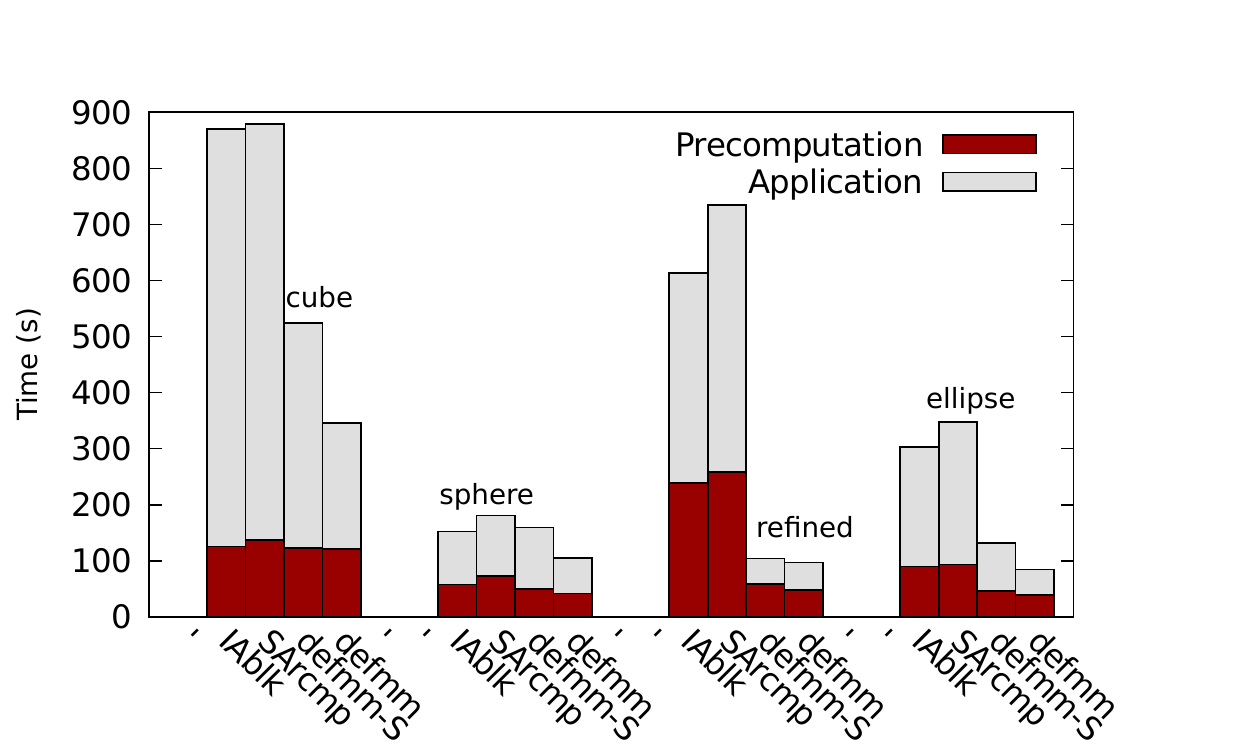}  
  \caption{$L=4$, $\kappa D = 64$}
  \label{fig:sub-second}
\end{subfigure}

\begin{subfigure}{.45\textwidth}
  \centering
  \includegraphics[trim={1cm 0 1cm 1cm},width=\linewidth]{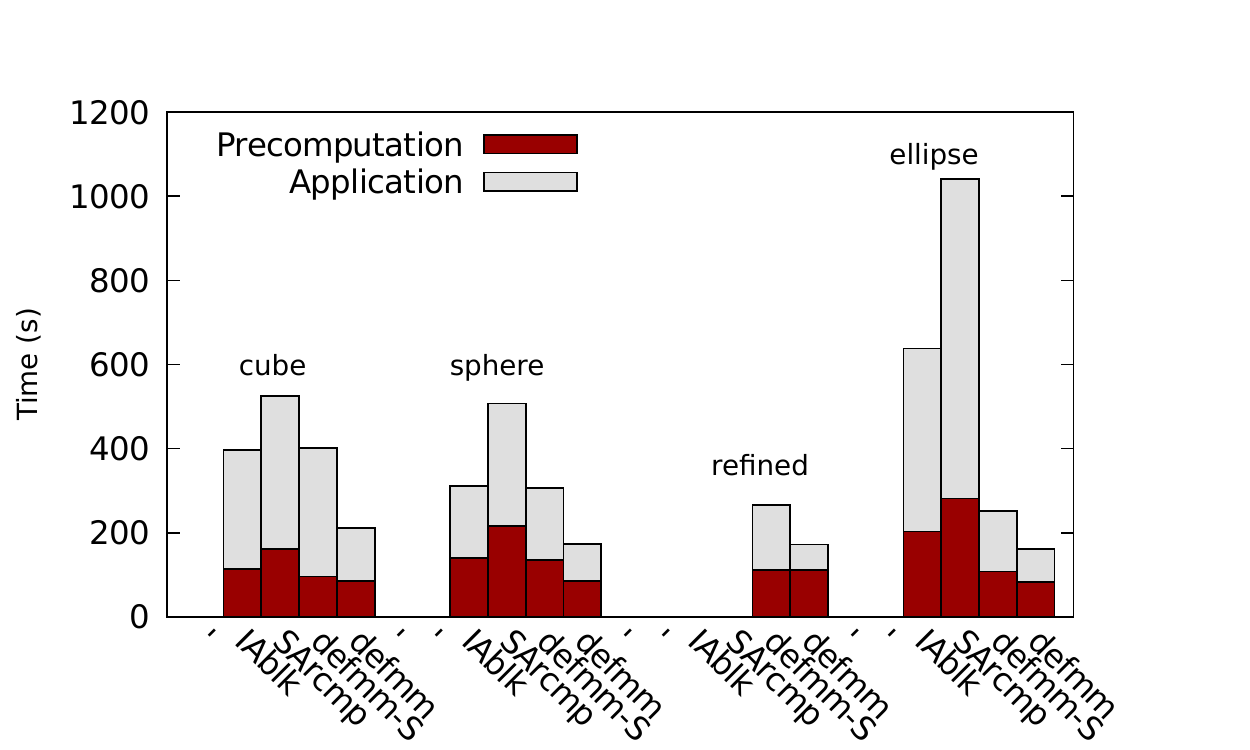}  
  \caption{$L=6$, $\kappa D = 0$}
  \label{fig:sub-third}
\end{subfigure}
\begin{subfigure}{.45\textwidth}
  \centering
  \includegraphics[trim={1cm 0 1cm 1cm},width=\linewidth]{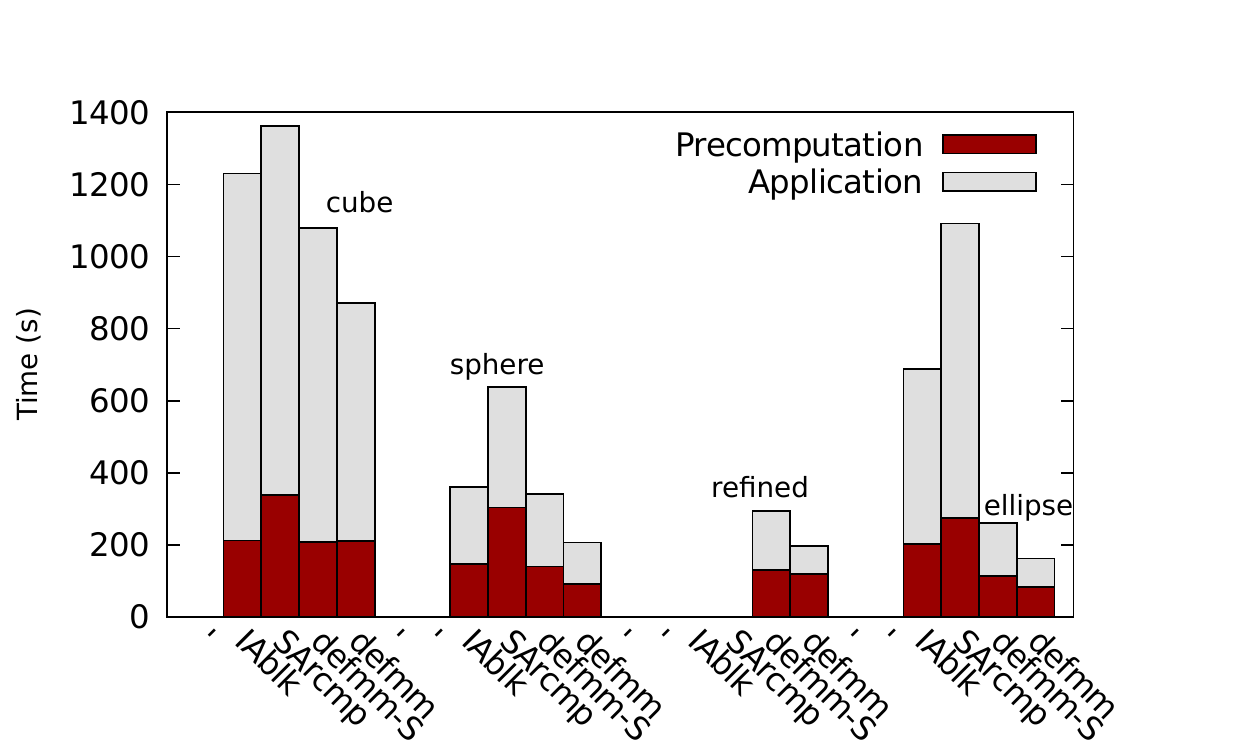}  
  \caption{$L=6$, $\kappa D = 64$}
  \label{fig:sub-fourth}
\end{subfigure}
\caption{Timings for one FMM execution 
 of the \textit{dfmm} and \textit{defmm} variants. The two tested variants of \textit{dfmm} are \textit{IAblk} 
 and \textit{SArcmp}. 
 \textit{defmm-S} corresponds to \textit{defmm} 
without 
P2P vectorization.}
\label{fig_compdefmmdfmm}
\end{figure}

\paragraph{Overall performance} 
Considering the overall times (including precomputation and application times), 
\textit{defmm} always outperforms \textit{dfmm}, being $1.3\times$ to $6.5\times$ faster than its best variant (\textit{IAblk} here). 
As detailed below, this is due to the different design of the two codes. 
Since the P2P operator is not vectorized in \textit{dfmm}, we also present in figure \ref{fig_compdefmmdfmm} a \textit{defmm} 
variant, denoted by \textit{defmm-S}, with a scalar (i.e. non-vectorized) P2P implementation. 
This enables us to more precisely study the impact of the other differences between \textit{defmm} and \textit{dfmm}.
\textit{defmm-S} offers similar or better performance than the best \textit{dfmm} variant on the uniform cube and on the sphere,
and outperforms \textit{dfmm} by factors $1.8\times$ to $5.9\times$ on more non-uniform distributions (i.e. the refined cube and the ellipse).
This shows 
that 
regarding directional FMMs our FFT-based approach  is 
competitive with or faster than the low-rank approximations used in \textit{dfmm}.

\paragraph{Sensitivity to the particle distribution} Since the distributions have the same number of particles ($\pm 1\%$), 
comparing two distributions (all other test parameters being identical) 
illustrates the sensitivity of the method to the particle distribution. Except for the uniform cube in the high-frequency regime 
(whose higher cost is due to its maximum number of effective directional expansions), the \textit{defmm} performance is few  
sensitive to the distribution, contrary to the \textit{dfmm} one. This is due to the differences in the tree construction and traversal  
(see section \ref{ss:design}): 
the combination of the $Ncrit$ strategy  
and of our specific DTT allows to better adapt in \textit{defmm} to the particle distribution
than the $MaxDepth$ strategy used in \textit{dfmm}.
Besides, the performance gap between \textit{IAblk} and \textit{defmm-S} is minimized for the (uniform) cube and the sphere (quasi-uniform on its surface) distributions.  These two distributions benefit indeed most from the BLAS routines and from the vector stacking in \textit{IAblk}.

\paragraph{Sensitivity to the wavenumber} 
Since both codes are based on the same directional approach, 
their performance  
is similarly impacted 
by an increase in the wavenumber.
Again, the uniform cube in the high-frequency regime is a special case (maximizing the number of effective expansions), 
where the \textit{dfmm} features (level-3 BLAS routines, cache management) lower here the performance gap with \textit{defmm} 
when the wavenumber increases: \textit{dfmm} is however still outperformed by \textit{defmm} in this case.  

\paragraph{Sensitivity to the interpolation order}

When moving from $L=4$ to $L=6$, the average performance ratio of \textit{defmm-S} over \textit{SArcmp} moves from $1.58\times$ to $2.40\times$ (for the uniform cube, the sphere and the ellipse test cases, and for any frequency).
This increasing performance gain with respect to the interpolation order is due to our FFT-based FMM. 
With respect to \textit{IAblk}, the performance gain of \textit{defmm-S} moves from $1.42\times$ to $1.56\times$: here the FFT gain is counterbalanced by the BLAS performance gain (which is greater for larger matrices). 

\paragraph{Precomputation costs} 
Thanks to our extension of the symmetry usage 
to the Fourier domain (see section \ref{ss:FFT_symmetries}), 
we manage to have on the uniform cube and on the sphere precomputation costs in \textit{defmm} as low as the \textit{dfmm} ones (considering the \textit{IAblk} variant which requires shorter precomputations than \textit{SArcmp}). 
In addition, in highly non-uniform distributions such as the refined cube and the ellipse, the precomputation costs of \textit{defmm} are drastically lower than the \textit{dfmm} ones, 
partly thanks to our $Ncrit$-based $2^d$-trees and to our blank passes (see section \ref{ss:design}) which minimize the required precomputations.

\section{Conclusion}
\label{section_conclusion}
In this paper we presented a new approach for the directional interpolation-based FMMs, using FFT techniques thanks to equispaced grids. We provided a consistency proof of the approximation process and showed how to extend the symmetries of the interpolation-based FMM to the Fourier domain. We detailed the algorithmic design of our \textit{defmm} library, as well as its high-performance optimizations on one CPU core. Finally, 
a comparison 
with a \textit{state-of-the-art} library exhibited the superior 
performance of our library 
in all test cases and in both high- and low-frequency regimes,
the \textit{defmm} performance being also few sensitive to the surface particle distribution. 

Future works will be dedicated to the \textit{defmm} parallelization. In this purpose, we will 
be able to lean on the dual tree traversal,
which is known to be highly efficient
regarding shared-memory parallelism 
(see e.g. \cite{yokota13,yokota18}),  
and on previous work (such as \cite{bensonpoulsontranengquistying14}) regarding distributed-memory parallelism. 
We also plan to integrate \textit{defmm} in an iterative solver in order to solve
complex realistic boundary integral equations.

\bibliographystyle{siamplain}
\bibliography{references}

\begin{thebibliography}{10}

\bibitem{yokota18}
{\sc M.~Abduljabbar, M.~A. Farhan, N.~Al-Harthi, R.~Chen, R.~Yokota, H.~Bagci,
  and D.~Keyes}, {\em Extreme scale fmm-accelerated boundary integral equation
  solver for wave scattering}, SIAM Journal on Scientific Computing, 41 (2019),
  pp.~C245--C268.

\bibitem{agullobramascoulauddarvemessnertoru12}
{\sc E.~Agullo, B.~Bramas, O.~Coulaud, E.~Darve, M.~Messner, and T.~Takahashi},
  {\em {Pipelining the Fast Multipole Method over a Runtime System}}, Research
  Report RR-7981, {INRIA}, May 2012.
\newblock https://hal.inria.fr/hal-00703130/file/RR-7981.pdf.

\bibitem{lars79}
{\sc L.~V. Ahlfors}, {\em Complex analysis: An introduction to the theory of
  analytic functions of one complex variable; 3rd ed.},  (1978), pp.~xi+331.

\bibitem{bensonpoulsontranengquistying14}
{\sc A.~R. Benson, J.~Poulson, K.~Tran, B.~Engquist, and L.~Ying}, {\em A
  parallel directional fast multipole method}, SIAM J. Sci. Comput., 36 (2014),
  p.~C335–C352.

\bibitem{blanchard:hal-01228519}
{\sc P.~Blanchard, O.~Coulaud, and E.~Darve}, {\em {Fast hierarchical
  algorithms for generating Gaussian random fields}}, Research Report 8811,
  {Inria Bordeaux Sud-Ouest}, Dec. 2015.

\bibitem{brandt91}
{\sc A.~Brandt}, {\em Multilevel computations of integral transforms and
  particle interactions with oscillatory kernels}, Comput. Phys. Comm., 65
  (1991), pp.~24--38.

\bibitem{borm15}
{\sc S.~Börm}, {\em {Directional H2-matrix compression for high-frequency
  problems}}, Numer. Linear Algebra Appl., 24 (2017).

\bibitem{bormlars04}
{\sc S.~Börm and L.~Grasedyck}, {\em {Low-Rank Approximation of Integral
  Operators by Interpolation}}, Computing, 72 (2004), pp.~325--332.

\bibitem{ceckadarve13}
{\sc C.~Cecka and E.~Darve}, {\em {Fourier-based Fast Multipole Method for the
  Helmholtz equation}}, SIAM J. Sci. Comput., 35 (2013), p.~A79–A103.

\bibitem{chenaubryoppelstruparsenlisdarve18}
{\sc C.~Chen, S.~Aubry, T.~Oppelstrup, A.~Arsenlis, and E.~Darve}, {\em Fast
  algorithms for evaluating the stress field of dislocation lines in
  anisotropic elastic media}, Modelling and Simulation in Materials Science and
  Engineering, 26 (2018), p.~045007.

\bibitem{wideband06}
{\sc H.~Cheng, W.~Y. Crutchfield, Z.~Gimbutas, L.~F. Greengard, J.~F. Ethridge,
  J.~Huang, V.~Rokhlin, N.~Yarvin, and J.~Zhao}, {\em {A wideband fast
  multipole method for the Helmholtz equation in three dimensions}}, J. Comput.
  Phys., 216 (2006), p.~300–325.

\bibitem{theseigor}
{\sc I.~Chollet}, {\em {Symmetries and Fast Multipole Methods for Oscillatory
  Kernels}}, {Ph.D. thesis}, {Sorbonne Universit{\'e}}, Mar. 2021,
  \url{https://tel.archives-ouvertes.fr/tel-03203231}.

\bibitem{coulaudfortinroman10}
{\sc O.~Coulaud, P.~Fortin, and J.~Roman}, {\em {High-performance BLAS
  formulation of the adaptive Fast Multipole Method}}, {Mathematical and
  Computer Modelling}, 51 (2010), pp.~177--188.

\bibitem{denen02}
{\sc W.~Dehnen}, {\em A hierarchical {O(N)} force calculation algorithm}, J.
  Comput. Phys., 179 (2002), p.~27–42.

\bibitem{engquistying}
{\sc B.~Engquist and L.~Ying}, {\em Fast directional multilevel algorithms for
  oscillatory kernels}, SIAM Journal on Scientific Computing, 29 (2007),
  p.~1710–1737.

\bibitem{darve09}
{\sc W.~Fong and E.~Darve}, {\em The black-box fast multipole method}, J.
  Comput. Phys., 228 (2009), pp.~8712--8725.

\bibitem{fortinathanassoulalambert11}
{\sc P.~Fortin, E.~Athanassoula, and J.-C. Lambert}, {\em {Comparisons of
  different codes for galactic N-body simulations}}, {Astronomy and
  Astrophysics - A\&A}, 531 (2011), p.~A120.

\bibitem{FFTWref}
{\sc M.~Frigo and S.~G. Johnson}, {\em The design and implementation of
  {FFTW3}}, Proceedings of the IEEE, 93 (2005), pp.~216--231.
\newblock Special issue on ``Program Generation, Optimization, and Platform
  Adaptation''.

\bibitem{gascasauer00}
{\sc M.~Gasca and T.~Sauer}, {\em Polynomial interpolation in several
  variables}, Advances in Computational Mathematics, 12 (2000), pp.~377--410.

\bibitem{gibermann01}
{\sc K.~Gibermann}, {\em {Multilevel Approximation of Boundary Integral
  Operators}}, Computing, 67 (2001), p.~183–207.

\bibitem{rokhlingreengard87}
{\sc L.~Greengard and V.~Rokhlin}, {\em A fast algorithm for particle
  simulations}, J. Comput. Phys., 73 (1987), pp.~325--348.

\bibitem{hariharanalurushanker02}
{\sc B.~{Hariharan}, S.~{Aluru}, and B.~{Shanker}}, {\em {A Scalable Parallel
  Fast Multipole Method for Analysis of Scattering from Perfect Electrically
  Conducting Surfaces}}, in SC '02: Proceedings of the 2002 ACM/IEEE Conference
  on Supercomputing, 2002, pp.~42--42.

\bibitem{malhotrabiros16}
{\sc D.~Malhotra and G.~Biros}, {\em {Algorithm 967: A Distributed-Memory Fast
  Multipole Method for Volume Potentials}}, ACM Trans. Math. Software, 43
  (2016).

\bibitem{messnerINRIA}
{\sc M.~Messner, B.~Bramas, O.~Coulaud, and E.~Darve}, {\em {Optimized M2L
  kernels for the Chebyshev interpolation based fast multipole method}},
  (2012).
\newblock https://arxiv.org/abs/1210.7292.

\bibitem{messner12}
{\sc M.~Messner, M.~Schanz, and E.~Darve}, {\em {Fast directional multilevel
  summation for oscillatory kernels based on Chebyshev interpolation}}, J.
  Comput. Phys., 231 (2012), p.~1175–1196.

\bibitem{mossnerreif09}
{\sc B.~Mößner and U.~Reif}, {\em Error bounds for polynomial tensor product
  interpolation}, Computing, 86 (2009), pp.~185--197.

\bibitem{naborskorsmeyerleightonwhite94}
{\sc K.~Nabors, F.~T. Korsmeyer, F.~T. Leighton, and J.~White}, {\em
  {Preconditioned, Adaptive, Multipole-Accelerated Iterative Methods for
  Three-Dimensional First-Kind Integral Equations of Potential Theory}}, SIAM
  J. Sci. Comput., 15 (1994), p.~713–735.

\bibitem{pavlovandonovkremenliev14}
{\sc V.~Pavlov, N.~Andonov, and G.~Kremenliev}, {\em {Porting and Verification
  of ExaFMM Library in MIC Architecture}},  (2014).
\newblock https://doi.org/10.5281/zenodo.822725.

\bibitem{plattetrefethenkuijlaars09}
{\sc R.~B. Platte, L.~N. Trefethen, and A.~B.~J. Kuijlaars}, {\em Impossibility
  of fast stable approximation of analytic functions from equispaced samples},
  SIAM Review, 53 (2011), pp.~308--318.

\bibitem{schoberteibert10}
{\sc D.~Schobert and T.~Eibert}, {\em {A multilevel interpolating fast integral
  solver with fast Fourier Transform acceleration}}, Symposium Digest - 20th
  URSI International Symposium on Electromagnetic Theory, EMTS 2010,  (2010),
  pp.~520 -- 523.

\bibitem{smith06}
{\sc S.~Smith}, {\em Lebesgue constants in polynomial interpolation}, Annales
  Mathematicae et Informaticae, 33 (2006), pp.~109--123.

\bibitem{yingbiroszorin04}
{\sc L.~Ying, G.~Biros, and D.~Zorin}, {\em A kernel-independent adaptive fast
  multipole algorithm in two and three dimensions}, J. Comput. Phys., 196
  (2004), p.~591–626.

\bibitem{yokota13}
{\sc R.~Yokota}, {\em An {FMM} based on dual tree traversal for many-core
  architectures}, Journal of Algorithms \& Computational Technology, 7 (2012).

\end{thebibliography}
\end{document}